\documentclass[11pt,a4paper,reqno]{amsart}
\usepackage[T1]{fontenc}
\usepackage[english]{babel}
\usepackage{mathtools}
\usepackage{amssymb}
\usepackage{xcolor}
\IfFileExists{libertinus.sty}{\usepackage{libertinus}}{\usepackage{lmodern}}
\IfFileExists{BOONDOX-r-calo.tfm}{\usepackage[cal=boondoxo,bb=ams]{mathalfa}}{}
\usepackage{microtype}
\usepackage{enumitem}
\usepackage{etoolbox}
\usepackage{cite}

\allowdisplaybreaks[2]
\usepackage[
 a4paper,
 left=30mm,
 right=30mm,
 top=27mm,
 bottom=30mm,
 headsep=8mm,
 footskip=13mm,
 heightrounded
]{geometry}

\setlist[itemize]{leftmargin=2.1em,itemsep=0.25em,topsep=0.45em,parsep=0pt}
\numberwithin{equation}{section}

\makeatletter
\def\@settitle{%
 \begin{center}
  \vspace*{-0.4em}
  {\normalfont\bfseries\fontsize{17}{21}\selectfont\@title\par}
  \vspace{0.7em}
 \end{center}}
\renewcommand\section{\@startsection{section}{1}{\z@}%
 {1.5\baselineskip plus 0.2\baselineskip minus 0.1\baselineskip}%
 {0.65\baselineskip}{\normalfont\Large\bfseries}}
\renewcommand\subsection{\@startsection{subsection}{2}{\z@}%
 {1.15\baselineskip plus 0.2\baselineskip minus 0.1\baselineskip}%
 {0.45\baselineskip}{\normalfont\large\bfseries}}
\makeatother

\makeatletter
\patchcmd{\@setauthors}{\centering\footnotesize}{\centering\normalsize}{}{}
\patchcmd{\@setauthors}{\MakeUppercase{\authors}}{\authors}{}{}
\patchcmd{\maketitle}{\uppercasenonmath\shorttitle}{}{}{}
\patchcmd{\maketitle}{\@nx\MakeUppercase{\the\toks@}}{\the\toks@}{}{}
\makeatother

\usepackage{hyperref}
\hypersetup{
 hidelinks,
 pdfencoding=auto,
 pdftitle={Sharp L2 estimates for wave equations with non-integrable speeds},
 pdfauthor={Halit Sevki Aslan, Wenhui Chen, Ryo Ikehata},
 pdfsubject={Large time behavior of wave equations with time-dependent propagation speeds},
 pdfkeywords={wave equation, time-dependent coefficient, sharp L2 estimate, low-frequency analysis}
}

\theoremstyle{plain}
\newtheorem{theorem}{Theorem}[section]
\newtheorem{lemma}[theorem]{Lemma}

\theoremstyle{definition}
\newtheorem{assumption}{Assumption}[section]

\theoremstyle{remark}
\newtheorem{remark}[theorem]{Remark}

\newcommand{\ml}{\mathcal}
\newcommand{\mb}{\mathbb}

\title[Wave equations with non-integrable propagation speeds]{Sharp $L^2$ estimates for wave equations with non-integrable speeds}

\author[H. S. Aslan]{Halit Sevki Aslan}
\address{Department of Computer Science and Mathematics, University of S\~ao Paulo, Ribeir\~ao Preto 14040-901, SP, Brazil}
\email{halitsevkiaslan@gmail.com}

\author[W. Chen]{Wenhui Chen}
\address{School of Mathematics and Information Science, Guangzhou University, Guangzhou 510006, P. R. China}
\email{wenhui.chen.math@gmail.com}

\keywords{wave equation, time-dependent coefficient, non-integrable propagation speed, sharp $L^2$ estimate, large-time behavior, low-frequency analysis}
\subjclass[2020]{Primary 35L05, Secondary 35L15, 35B40}
\date{}

\begin{document}

\begin{abstract}
We study the large-time behavior of the displacement for wave equations with non-integrable speeds and zero initial displacement. For initial velocities in $L^2\cap L^1$, we derive an upper bound estimate in the $L^2$ norm, with the rate determined by the propagation speed and its time integral.  Under stronger assumptions on the propagation speed and the non-trivial mean condition on the initial velocity, we derive a lower bound estimate of the same order without using an explicit formula for the Fourier multiplier.  The estimates apply to polynomial, logarithmically modified, log-periodically oscillating, and exponential-type speeds.
\end{abstract}

\maketitle

\section{Introduction}
Our main purpose is to derive sharp large time $L^2$ estimates for the solution to the Cauchy problem
\begin{align}\label{eq:main-problem}
\begin{cases}
u_{tt}-[a(t)]^2\Delta u=0,&x\in\mb R^n,\ t>0,\\
u(0,x)=0,\ \ u_t(0,x)=u_1(x),&x\in\mb R^n,
\end{cases}
\end{align}
where $n\geqslant1$ and $a(t)>0$.  Throughout the paper the propagation speed is non-integrable, namely, the optical distance
\begin{align}\label{eq:optical-distance}
\Lambda(t):=1+\int_0^t a(s)\,\mathrm ds\rightarrow+\infty \ \ \mbox{as}\ \ t\rightarrow+\infty.
\end{align}

The $L^2$ norm of the displacement is not controlled by the natural energy, which is
\begin{align*}
E_u(t):=\frac12\left(\|u_t(t,\cdot)\|_{L^2}^2+[a(t)]^2\|\nabla u(t,\cdot)\|_{L^2}^2\right)
\end{align*}
and satisfies
\begin{align*}
E_u'(t)=a(t)a'(t)\|\nabla u(t,\cdot)\|_{L^2}^2.
\end{align*}
For a constant speed, the displacement of a nontrivial solution with $u(0,x)=0$ and $u_1\in L^2\cap L^1$ does not tend to zero in $L^2$ as $t\rightarrow+\infty$. For time-dependent speeds, however, $L^2$ decay is possible even when the natural energy is nondecreasing. In particular, our estimates for $a(t)=\mathrm e^t$ give $\|u(t,\cdot)\|_{L^2}\rightarrow0$ for every $n\geqslant1$.

The study of constant-speed wave equations is well established from the viewpoints of scattering and local energy decay.  We refer to \cite{Morawetz=1961,Morawetz-Ralston-Strauss=1977,Lax-Phillips=1989} and the monograph \cite{Ebert-Reissig=2018}.  Whole-space $L^p$--$L^q$ estimates were obtained in \cite{Strichartz=1970,Peral=1980}.  For the global $L^2$ norm of the displacement, the behavior of the Fourier multiplier for small frequencies plays an essential role.  Under the weighted $L^{1,1}$ assumption on $u_1$ and the following non-trivial mean condition
\begin{align*}
P_{u_1}:=\int_{\mb R^n}u_1(x)\,\mathrm dx\neq0,
\end{align*}
the sharp estimates in \cite{Ikehata=2023,Chen-Ikehata=2026} are
\begin{align*}
\|u(t,\cdot)\|_{L^2} \approx
\begin{cases}
t^{\frac{1}{2}}&\text{if}\ \ n=1,\\
(\ln t)^{\frac{1}{2}}&\text{if}\ \ n=2,\\
1&\text{if}\ \ n\geqslant3,
\end{cases}
\end{align*}
as $t\rightarrow+\infty$.  The weighted $L^{1,1}$ assumption was removed for $n\leqslant2$ in \cite[Appendix~A]{Chen-Takeda=2023} and for every $n\geqslant1$ in \cite[Appendix: wave equation]{Takeda=2026}, with $u_1\in L^2\cap L^1$.  More recently, \cite{Chen=2026-Ball} established the following dichotomy. For data in $L^2\times H^{-1}$, the displacement norm has a finite limit if $u_1\in\dot H^{-1}$ and tends to infinity if $u_1\in H^{-1}\setminus\dot H^{-1}$.

For variable propagation speeds, most results concern the natural energy or dispersive upper estimates.  Energy asymptotics and generalized energy conservation were studied in \cite{Hirosawa=2007,Hirosawa-Wirth=2009,Ebert-Fitriana-Hirosawa=2015,Ghisi-Gobbino=2025}. The $L^p$--$L^q$ upper bound estimates were derived in \cite{Reissig-Yagdjian=2000-increasing,Galstian=2003, Reissig-Yagdjian=2000-oscillations,Reissig-Smith=2005,Aslan-Reissig=2022} for several classes of increasing, bounded, and oscillating coefficients.  Scattering and asymptotic profiles were considered in \cite{Matsuyama=2006}.  In one space dimension, \cite{Ikehata=2025-wave} proved an $L^2$ boundedness result for smooth compactly supported data and bounded monotone speeds bounded away from zero when the zeroth moment of the initial velocity vanishes.  To the best of our knowledge, sharp upper and lower bound estimates for the displacement have not been established in a unified framework for the class of non-integrable propagation speeds considered below.

We introduce the following two time-dependent quantities:
\begin{align}\label{eq:general-Q}
\ml Q_{a,n}(t):=\int_0^t\frac{(1+s)^2[a(s)]^2}{[\Lambda(s)]^{n+1}}\,\mathrm ds
\end{align}
and
\begin{align}\label{eq:general-rate}
\ml G_{a,n}(t):=\left[t^2[\Lambda(t)]^{-n}+\frac{\ml Q_{a,n}(t)}{a(t)}\right]^{\frac12}.
\end{align}
We use the following three assumptions on the propagation speed in the present paper.

\begin{assumption}
There exists a constant $C>0$ such that, for every $T>0$ and $\varphi\in H_0^1(0,T)$,
\begin{align}\label{eq:A1}
\int_0^T[a(s)]^2|\varphi(s)|^2\,\mathrm ds \leqslant C[\Lambda(T)]^2 \int_0^T|\varphi'(s)|^2\,\mathrm ds. \tag{$\mathrm A_1$}
\end{align}
\end{assumption}

\begin{assumption}
The propagation speed satisfies
\begin{align}\label{eq:A2}
\sup_{T>0} \frac{1}{T[\Lambda(T)]^2} \int_0^T(T-s)s[a(s)]^2\,\mathrm ds <+\infty. \tag{$\mathrm A_2$}
\end{align}
\end{assumption}

\begin{assumption}
For an integer $m\geqslant1$, the propagation speed satisfies
\begin{align}\label{eq:A3}
|a^{(k)}(t)| \lesssim a(t)\left(\frac{a(t)}{\Lambda(t)}\right)^k \ \ \mbox{for}\ \ k=1,\ldots,m\ \ \mbox{and}\ \ t\geqslant0. \tag{$\mathrm A_{3,m}$}
\end{align}
\end{assumption}

For the upper estimate, we assume that $a\in\ml C^2\bigl([0,+\infty)\bigr)$ is positive and satisfies \eqref{eq:A1} and \eqref{eq:A3} with $m=2$. For each $u_1\in L^2\cap L^1$, Theorem~\ref{thm:upper} gives an upper estimate with rate $\ml G_{a,n}$. To derive a lower bound estimate of the same order, let $a\in\ml C^3\bigl([0,+\infty)\bigr)$ be positive and satisfy \eqref{eq:A2} and \eqref{eq:A3} with $m=3$. Here \eqref{eq:A2} implies \eqref{eq:A1}, and the choice $m=3$ in \eqref{eq:A3} imposes an additional assumption on the third derivative of $a$. If $u_1\in L^2\cap L^1$ and $P_{u_1}\neq0$, then, for all sufficiently large $t$,
\begin{align*}
\ml G_{a,n}(t)|P_{u_1}| \lesssim \|u(t,\cdot)\|_{L^2} \lesssim \ml G_{a,n}(t)\|u_1\|_{L^2\cap L^1}.
\end{align*}
The function $\ml G_{a,n}$ can be evaluated for several concrete classes of propagation speeds.  For polynomial and logarithmically modified speeds, the sharp estimates determine the transitions among growth, boundedness, and decay. The estimates also cover log-periodic oscillations.  For exponential-type speeds, they yield decay for every $n\geqslant1$.

\medskip \paragraph{Notation.} Unless otherwise stated, positive constants independent of $t$ and the frequency are denoted by $c$ and $C$ and may change from line to line.  They may depend on $n$ and on the fixed speed $a$.  We write $f\lesssim g$ if $f\leqslant Cg$, and $f\approx g$ if both $f\lesssim g$ and $g\lesssim f$. For vectors and matrices, $|\cdot|$ and $\|\cdot\|$ denote the Euclidean norm and the induced operator norm, respectively. Subscripts on constants or comparison symbols indicate dependence on the corresponding parameters.

\section{Main results}

For simplicity, we impose $u(0,x)=0$ in \eqref{eq:main-problem} and consider the component generated by $u_t(0,x)=u_1(x)$. For general initial data $u_0,u_1\in L^1$, the zero-frequency mode satisfies
\begin{align*}
\widehat u(t,0)=\widehat u_0(0)+t\widehat u_1(0).
\end{align*}
Thus, when $P_{u_1}\neq0$, the initial velocity determines the leading zero-frequency behavior.

\subsection{Sharp \texorpdfstring{$L^2$}{L2} estimates}

\begin{theorem}[Upper estimate]\label{thm:upper}
Let $a\in\ml C^2\bigl([0,+\infty)\bigr)$ be positive and satisfy \eqref{eq:A1} and \eqref{eq:A3} with $m=2$. If $u_1\in L^2\cap L^1$, then the solution to the linear wave equation \eqref{eq:main-problem} satisfies
\begin{align}\label{eq:general-upper}
\|u(t,\cdot)\|_{L^2} \lesssim \ml G_{a,n}(t)\|u_1\|_{L^2\cap L^1}
\end{align}
for all sufficiently large $t$.
\end{theorem}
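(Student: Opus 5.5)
We work on the Fourier side. The transformed problem is $\widehat u_{tt}+a(t)^2|\xi|^2\widehat u=0$ with $\widehat u(0,\xi)=0$ and $\widehat u_t(0,\xi)=\widehat u_1(\xi)$, so $\widehat u(t,\xi)=\Phi(t,\xi)\widehat u_1(\xi)$ for a multiplier $\Phi$. By Plancherel, it suffices to estimate $\int|\Phi(t,\xi)|^2|\widehat u_1(\xi)|^2\,\mathrm d\xi$.

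We split frequencies according to the phase-space zones $Z_{\mathrm{hyp}}=\{|\xi|\Lambda(t)\geqslant N\}$ and $Z_{\mathrm{pd}}=\{|\xi|\Lambda(t)<N\}$. On the low frequencies $|\xi|\leqslant1$ we use $|\widehat u_1|\leqslant\|u_1\|_{L^1}$. On the high frequencies we use the $L^2$ norm; there $|\Phi|\lesssim 1/(|\xi|\sqrt{a(0)a(t)})$, which decays, so these frequencies contribute at most $\|u_1\|_{L^2}/\sqrt{a(t)}$. This is dominated by $\ml G_{a,n}$, since $\ml Q_{a,n}(t)\gtrsim1$.

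\emph{Pseudo-differential zone.} For each fixed $\xi$, define $t_\xi$ by $|\xi|\Lambda(t_\xi)=N$. For $t\leqslant t_\xi$ we aim for $|\Phi(t,\xi)|\lesssim t$. Since $\widehat u(0)=0$, the function $\varphi=\Phi(\cdot,\xi)$ on $[0,T]$ satisfies the energy-type identity obtained by multiplying the ODE by $\bar\varphi$ and integrating. Assumption \eqref{eq:A1} is exactly a Hardy/Poincaré inequality making $\int a^2|\xi|^2|\varphi|^2\leqslant C|\xi|^2\Lambda(T)^2\int|\varphi'|^2$ small relative to $\int|\varphi'|^2$ when $|\xi|\Lambda(T)\leqslant N$ with $N$ small. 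This yields $\int_0^T|\varphi'|^2\lesssim T$ and hence $|\varphi(T)|\lesssim T$. (Applying \eqref{eq:A1} to a cut-off of $\varphi$ vanishing at $T$, or directly comparing with the Volterra equation $\varphi(t)=t-\int_0^t(t-s)a^2|\xi|^2\varphi\,\mathrm ds$, should give this.) Integrating $t^2$ over $|\xi|\leqslant N/\Lambda(t)$ gives $t^2\Lambda(t)^{-n}$, the first term of $\ml G_{a,n}^2$.

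\emph{Hyperbolic zone.} For $t\geqslant t_\xi$ we write the system for $(i a|\xi|\widehat u,\widehat u_t)$ and diagonalize. Assumption \eqref{eq:A3} with $m=2$ allows one diagonalization step plus one refinement, so the remainder is of order $a'^2/a^3|\xi|^{-1}+a''/(a^2|\xi|)$. This is $\lesssim a/(\Lambda^2|\xi|)$, which is integrable over $[t_\xi,\infty)$ with integral $\lesssim 1/(|\xi|\Lambda(t_\xi))=1/N$. The resulting WKB bound is
\begin{align*}
|\Phi(t,\xi)|\lesssim \sqrt{\frac{a(t_\xi)}{a(t)}}\,|\Phi|_{t=t_\xi}+\frac{|\Phi_t|_{t=t_\xi}}{|\xi|\sqrt{a(t_\xi)a(t)}}.
\end{align*}
At $t_\xi$ the first zone gives $|\Phi|\lesssim t_\xi$ and $|\Phi_t|\lesssim1$ (the latter also from the energy argument). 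Using $|\xi|\approx1/\Lambda(t_\xi)$, we get $|\Phi(t,\xi)|^2\lesssim a(t)^{-1}\bigl(a(t_\xi)t_\xi^2+\Lambda(t_\xi)^2/a(t_\xi)\bigr)$.

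We then integrate over $N/\Lambda(t)\leqslant|\xi|\leqslant1$ by changing variables $|\xi|=N/\Lambda(s)$, so that $\mathrm d|\xi|\approx a(s)\Lambda(s)^{-2}\,\mathrm ds$ and $|\xi|^{n-1}\mathrm d|\xi|\approx a\Lambda^{-n-1}\,\mathrm ds$. The first piece becomes $a(t)^{-1}\int_0^t s^2a(s)^2\Lambda(s)^{-n-1}\,\mathrm ds\lesssim\ml Q_{a,n}(t)/a(t)$. The second piece, $a(t)^{-1}\int_0^t\Lambda^{1-n}\,\mathrm ds$, must also be bounded by $\ml Q_{a,n}/a$, possibly after improving the bound at $t_\xi$.

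\textbf{Main obstacle.} I expect the hardest step to be the transition at $t_\xi$: obtaining $|\Phi_t(t_\xi)|\lesssim1$ together with a combined bound that matches the weight $(1+s)^2a^2$ in $\ml Q_{a,n}$. The energy $|\Phi_t|^2+a^2|\xi|^2|\Phi|^2$ is likely best measured as $|\xi|^2a\,|\Phi|^2$-type quantities. If the crude term $\Lambda^2/a$ is not dominated, I would instead exploit \eqref{eq:A1} again to show $a(t_\xi)|\xi|\,|\Phi(t_\xi)|+|\Phi_t(t_\xi)|\lesssim (1+t_\xi)a(t_\xi)/\Lambda(t_\xi)$, which gives precisely the $\ml Q_{a,n}$ integrand.
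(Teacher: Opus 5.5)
\textbf{Overall.} Your architecture is the paper's: a ball $|\xi|\Lambda(t)\lesssim1$ where the multiplier is $O(t)$, normal-form propagation of the micro-energy $(\sqrt a\,|\xi|\Phi,\,a^{-1/2}\Phi_t)$ under \eqref{eq:A3} with $m=2$, and the substitution $|\xi|=N/\Lambda(s)$ producing $\ml Q_{a,n}$. There is, however, a genuine gap in the pseudo-differential zone, which is the only place \eqref{eq:A1} enters.

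\textbf{The low-frequency zone.} Multiplying by $\bar\varphi$ on $[0,T]$ gives $\int_0^T|\varphi'|^2=\varphi'(T)\overline{\varphi(T)}+|\xi|^2\int_0^Ta^2|\varphi|^2$. Since $\varphi(T)\neq0$, \eqref{eq:A1} does not apply, and the boundary term consists exactly of the unknowns. Neither of your fixes is carried out. The Volterra comparison in the norm $\sup_s|f(s)|/s$ has operator norm $|\xi|^2\sup_{t\leqslant T}t^{-1}\int_0^t(t-s)s[a(s)]^2\,\mathrm ds$, so it needs \eqref{eq:A2}, which Theorem~\ref{thm:upper} does not assume.

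Even if the energy argument closed, an $L^2$-in-time bound on $\varphi'$ would not give the pointwise value $\Phi_t(t_\xi)$ you need. In fact your bound $|\Phi_t(t_\xi)|\lesssim1$ is false. Take $a(t)=\mathrm e^t$, which the theorem covers, so that $\Lambda=\mathrm e^t$. Then $\Phi_t=1-|\xi|^2\int_0^t\mathrm e^{2s}\Phi\,\mathrm ds$ with $\Phi\approx s$ gives $\Phi_t(t_\xi)\approx1-\tfrac{N^2}{2}t_\xi$, which is unbounded.

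The missing idea is to apply \eqref{eq:A1} on $[0,T_0]$, where $T_0$ is the first positive zero of $\Phi(\cdot,\xi)$; there $\varphi\in H_0^1(0,T_0)$ and no boundary term appears. Fix $t$ with $|\xi|\Lambda(t)\leqslant\varepsilon$ and choose $T$ with $\Lambda(T)=2\Lambda(t)$. Then $4C\varepsilon^2<1$ rules out $T_0\leqslant T$. Hence $\Phi>0$ and $\Phi_{tt}=-a^2|\xi|^2\Phi\leqslant0$ on $(0,T]$. Concavity then gives $0<\Phi\leqslant s$ and $\Phi_t\leqslant1$. It also gives $0<\Phi(T)\leqslant\Phi(t)+(T-t)\Phi_t(t)$, with $T-t\approx\Lambda(t)/a(t)$ because $a\approx a(t)$ on $[t,T]$ by \eqref{eq:A3}. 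Together these yield the correct bound $|\Phi_t(t,\xi)|\lesssim1+t\,a(t)/\Lambda(t)$.

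\textbf{Your ``main obstacle''.} It disappears once you use an observation you did not make. For $h=\Lambda/a$ one has $h'=1-\Lambda a'/a^2$, so \eqref{eq:A3} with $k=1$ gives $|h'|\lesssim1$, that is, $\Lambda(t)/a(t)\lesssim1+t$. Hence $1+ta/\Lambda\lesssim(1+t)a/\Lambda$, which is exactly your fallback target at $t_\xi$. The entrance micro-energy is therefore $\lesssim\sqrt{a(t_\xi)}\,|\xi|(1+t_\xi)$. Equivalently, your crude term obeys $\Lambda^2/a\lesssim a(1+s)^2$, so your second piece satisfies $\Lambda^{1-n}\lesssim(1+s)^2a^2\Lambda^{-n-1}$ pointwise and is absorbed into $\ml Q_{a,n}$.

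\textbf{The choice of $N$.} A single threshold $N$ cannot be both small, as the \eqref{eq:A1} argument requires, and large, as the refined diagonalizer requires. That diagonalizer is invertible only where $|a'|/(a^2|\xi|)\lesssim(|\xi|\Lambda)^{-1}$ is small. The paper resolves this in three steps:
\begin{itemize}
\item it uses a small $\varepsilon$ for the low zone;
\item it applies plain Gronwall in the optical variable $\tau=|\xi|\Lambda(t)$ on the compact range $[\varepsilon,\tau_0]$, where the non-rotational coefficient is $O(\tau^{-1})$;
\item it uses the normal form only for $\tau\geqslant\tau_0$.
\end{itemize}
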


\begin{theorem}[Sharp estimate]\label{thm:sharp}
Let $a\in\ml C^3\bigl([0,+\infty)\bigr)$ be positive and satisfy \eqref{eq:A2} and \eqref{eq:A3} with $m=3$. If $u_1\in L^2\cap L^1$ and $P_{u_1}\neq0$, then the solution to the linear wave equation \eqref{eq:main-problem} satisfies
\begin{align}\label{eq:general-sharp}
\ml G_{a,n}(t)|P_{u_1}| \lesssim \|u(t,\cdot)\|_{L^2} \lesssim \ml G_{a,n}(t)\|u_1\|_{L^2\cap L^1}
\end{align}
for all sufficiently large $t$. The implicit constant in the lower estimate and the time after which it holds may depend on $u_1$.
\end{theorem}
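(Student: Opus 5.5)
Since \eqref{eq:A2} implies \eqref{eq:A1} and \eqref{eq:A3} with $m=3$ contains the case $m=2$, the upper bound in \eqref{eq:general-sharp} follows directly from Theorem~\ref{thm:upper}. It remains to prove the lower bound.

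We work in Fourier space and write $\widehat u(t,\xi)=K(t,|\xi|)\widehat u_1(\xi)$, where $K(t,r)$ solves $K''+a(t)^2r^2K=0$ with $K(0,r)=0$ and $K'(0,r)=1$. We split $\widehat u_1(\xi)=P_{u_1}+(\widehat u_1(\xi)-P_{u_1})$. The contribution of the second part is small in a suitably chosen low-frequency zone, because $\widehat u_1$ is continuous at $0$ whenever $u_1\in L^1$. We therefore only need to show
\begin{align*}
\int_{|\xi|\leqslant\varepsilon_0}|K(t,|\xi|)|^2\,\mathrm d\xi\gtrsim \ml G_{a,n}(t)^2,
\end{align*}
together with the bound $\int_{|\xi|\leqslant\delta}|K|^2\,|\widehat u_1-P_{u_1}|^2\,\mathrm d\xi\leqslant o(1)\,\ml G_{a,n}(t)^2$ as $\delta\to0$. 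The latter follows from the upper-bound machinery, since the same integral with weight $1$ is $\lesssim\ml G_{a,n}^2$. Choosing $\delta$ small, depending on $u_1$, then absorbs the error. This is why the constants in the lower bound depend on $u_1$.

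The lower bound for $\int|K|^2$ comes from two pieces matching the two terms in $\ml G_{a,n}^2$. The first piece is the pseudo-differential zone $r\Lambda(t)\leqslant c$. By \eqref{eq:A2}, a Volterra iteration $K(t,r)=t-r^2\int_0^t(t-s)a(s)^2K(s,r)\,\mathrm ds$ gives $|K(t,r)-t|\leqslant r^2\int_0^t(t-s)s\,a(s)^2\,\mathrm ds\lesssim r^2t\Lambda(t)^2$. Hence $K\geqslant t/2$ for $r\leqslant c/\Lambda(t)$, and integrating gives $\gtrsim t^2\Lambda(t)^{-n}$.

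The second piece is the hyperbolic zone $r\Lambda(s)\geqslant N$. There we perform a WKB diagonalization using \eqref{eq:A3} with $m=3$; the symbol class $a(a/\Lambda)^k$ makes the remainders integrable after two diagonalization steps. This yields the representation
\begin{align*}
K(t,r)=\frac{1}{\sqrt{a(t)a(t_r)}}\,\frac{A(r)}{r}\Bigl(\sin\bigl(r(\Lambda(t)-\Lambda(t_r))+\theta(r)\bigr)+o(1)\Bigr),
\end{align*}
where $t_r$ is defined by $r\Lambda(t_r)=N$. The amplitude $A(r)/\sqrt{a(t_r)}$ is determined by matching at $t_r$ with the pseudo-differential value $K(t_r,r)\approx t_r$. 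This matching gives an amplitude of size $\approx t_r\sqrt{a(t_r)}$, because the derivative at $t_r$ is $\approx 1$ while $a(t_r)r\,t_r\lesssim 1$ by \eqref{eq:A2}. For $t$ large the oscillating phase $r\Lambda(t)$ averages over $r$-shells, so $\sin^2$ may be replaced by $1/2$. Integrating in $r$ over $1/\Lambda(t)\lesssim r\leqslant\varepsilon_0$ and changing variables $r=N/\Lambda(s)$, so that $\mathrm dr=N a(s)\Lambda(s)^{-2}\,\mathrm ds$, produces
\begin{align*}
\frac{1}{a(t)}\int r^{n-1}t_r^2\,a(t_r)\,\mathrm dr\approx\frac{1}{a(t)}\int_{s_0}^{t}\frac{(1+s)^2a(s)^2}{\Lambda(s)^{n+1}}\,\mathrm ds,
\end{align*}
that is, $\ml Q_{a,n}(t)/a(t)$ up to the contribution of an initial interval, which is bounded.

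\textbf{Main obstacle.} The hard step is the lower bound for the oscillatory integral in the hyperbolic zone, without an explicit multiplier. We need a uniform lower bound on the amplitude, not only an upper bound, and we need to rule out cancellation in $\int\sin^2$. For the amplitude, we would first try a Wronskian-type conserved quantity: $\mathcal E(s,r)=|K'|^2/a+a r^2|K|^2$ is almost invariant under \eqref{eq:A3}. Its value at $s=t_r$ is $\gtrsim 1/a(t_r)$, because $K'(t_r,r)\approx1$, and $\mathcal E$ controls $|K(t,r)|^2$ after averaging in phase. For the phase averaging, we would use $\sin^2x=\tfrac12-\tfrac12\cos 2x$ and treat the $\cos$ term as a Riemann--Lebesgue/integration-by-parts error in $r$, using $\partial_r(r\Lambda(t))\approx\Lambda(t)\to\infty$.
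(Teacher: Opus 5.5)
\textbf{Gap: frequency regularity of the amplitude.} Your outline has the same architecture as the paper's proof: a Volterra expansion on $r\Lambda(t)\lesssim1$, normal-form propagation in the hyperbolic zone, averaging of the oscillation in $r$, and the substitution $r=\varepsilon/\Lambda(s)$ to recover $\mathcal Q_{a,n}$. However, the step you call the main obstacle is left open, and it is the core of the lower bound. Integrating $\cos\bigl(2r\Lambda(t)+2\theta(r)\bigr)$ by parts in $r$ is the right idea. It moves the derivative onto the amplitude and onto $\theta$, so you need a uniform frequency-regularity bound $r|\partial_rV(t,r)|\lesssim|V(t,r)|$ (equivalently $r|A'|\lesssim A$ and $r|\theta'|\lesssim1$), valid for all $t$ and all $r\geqslant N/\Lambda(t)$.

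Riemann--Lebesgue cannot replace this bound. Both the amplitude and the range $[N/\Lambda(t),\rho]$ move with $t$, and a shell with $r\Lambda(t)\approx M$ carries only about $M$ oscillations. In cases such as $n\geqslant2$, $\alpha=\alpha_{\mathrm c}(n)$, every dyadic shell contributes comparably to $\mathcal Q_{a,n}(t)$. Without control of $V$ in $r$, cancellation really occurs: $V(r)=|V(r)|\bigl(\sin(Lr),-\cos(Lr)\bigr)$ gives $V_1\cos(Lr)+V_2\sin(Lr)\equiv0$.

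The paper gets the bound in two pieces:
\begin{itemize}
\item the derivative of the entrance data along $r\Lambda(s_r)=\varepsilon$ (Lemma~\ref{lem:entrance-data});
\item the derivative of the transfer matrix in $r$ at fixed $\tau$ (Lemma~\ref{lem:transfer-matrix}).
\end{itemize}
It then feeds the bound into the scale-invariant integration by parts of Lemma~\ref{lem:oscillatory-observability}, with cutoff $R=N/\Lambda(t)$. The $r$-derivative of the normal-form remainder needs $|r\partial_r\partial_\tau b|\lesssim\tau^{-2}$, i.e.\ $|\mu''|\lesssim\eta^2$. This is exactly where the third derivative of $a$ enters. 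You instead assign \eqref{eq:A3} with $m=3$ to a second diagonalization step, which is unnecessary: one normal-form step already leaves an $O(\tau^{-2})$ remainder under $m=2$.

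\textbf{The amplitude argument also needs correcting.}
\begin{itemize}
\item \emph{Where to match.} Matching with $K(t_r,r)\approx t_r$ fails at $r\Lambda(t_r)=N$ with $N$ large; for $a\equiv1$, $K=\sin(rt)/r$. The entrance must be at $r\Lambda(s_r)=\varepsilon$ small. On $\tau\in[\varepsilon,\tau_0]$ the propagator is only bounded with bounded inverse. So you get two-sided bounds $|V|\approx B(r)$, not an explicit $A(r),\theta(r)$ with $o(1)$ errors from the start.
\item \emph{Near-invariance of $\mathcal E$.} $\mathcal E=|W|^2$ is not almost invariant, since $\int|b|\,\mathrm d\tau$ diverges logarithmically. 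It stays comparable to its entrance value only through the normal form.
\item \emph{Size of the lower bound.} Your bound $\mathcal E(t_r)\gtrsim1/a(t_r)$ is too weak. $B(r)^2$ exceeds $1/a(s_r)$ by a factor $\approx\varepsilon^2\bigl((1+s_r)a(s_r)/\Lambda(s_r)\bigr)^2$, which is unbounded for exponential-type speeds. For those speeds $K_t(s_r,r)\approx1$ also fails: it is only $1+O\bigl(\varepsilon^2s_ra(s_r)/\Lambda(s_r)\bigr)$, and the correction is unbounded.
\item \emph{Direction of the inequality.} $a(t_r)rt_r\lesssim1$ is false in general. The reverse inequality is what holds, via $\Lambda/a\lesssim1+t$, and it comes from \eqref{eq:A3} with $k=1$, not from \eqref{eq:A2}.
\item \emph{Correct source.} The lower bound should come from the displacement component: $\mathcal E(s_r)\geqslant a(s_r)r^2K(s_r,r)^2\approx B(r)^2$.
\end{itemize}

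\textbf{A smaller point.} Your splitting of $\widehat u_1$ compares balls of different radii: the lower bound over $|\xi|\leqslant\varepsilon_0$ does not combine with an error bound over $|\xi|\leqslant\delta$. It is simpler to use $|\widehat u_1|\geqslant|P_{u_1}|/2$ on one small ball and prove the lower bound there. The time ranges $[0,s_\rho]$ and $[\sigma_N(t),t]$ are then handled separately, as in the paper.
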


\begin{remark}\label{rem:A2-implies-A1}
The assumptions on the propagation speed in Theorem~\ref{thm:sharp} imply those of Theorem~\ref{thm:upper}.  Indeed, for $\varphi\in H_0^1(0,T)$ and $s\in[0,T]$, the boundary conditions give
\begin{align*}
\varphi(s) = \frac{T-s}{T}\int_0^s\varphi'(r)\,\mathrm dr -\frac{s}{T}\int_s^T\varphi'(r)\,\mathrm dr.
\end{align*}
The Cauchy--Schwarz inequality then yields
\begin{align*}
|\varphi(s)|^2 &\leqslant \left[ \frac{s(T-s)^2}{T^2} +\frac{(T-s)s^2}{T^2} \right] \int_0^T|\varphi'(r)|^2\,\mathrm dr= \frac{s(T-s)}{T} \int_0^T|\varphi'(r)|^2\,\mathrm dr.
\end{align*}
After multiplication by $[a(s)]^2$ and integration over $[0,T]$, the condition \eqref{eq:A2} yields
\begin{align*}
\int_0^T[a(s)]^2|\varphi(s)|^2\,\mathrm ds &\leqslant \frac{1}{T} \int_0^T(T-s)s[a(s)]^2\,\mathrm ds \int_0^T|\varphi'(r)|^2\,\mathrm dr\\
&\lesssim [\Lambda(T)]^2 \int_0^T|\varphi'(r)|^2\,\mathrm dr.
\end{align*}
Thus, \eqref{eq:A2} implies \eqref{eq:A1}.  Moreover, condition \eqref{eq:A3} with $m=3$ includes the assumptions for $m=2$ and an additional estimate for the third derivative of $a$.
\end{remark}

The polynomial family $a(t)=(1+t)^\alpha$ satisfies \eqref{eq:optical-distance} precisely when $\alpha\geqslant-1$.  If $\alpha>-1$, then $\Lambda(t)\approx(1+t)^{\alpha+1}$ and \eqref{eq:A3} holds.  At the boundary $\alpha=-1$, the optical distance is $\Lambda(t)=1+\ln(1+t)$.  This speed is the endpoint of the non-integrable polynomial family and is treated separately below.  The condition \eqref{eq:A2} remains valid at this endpoint, whereas the condition \eqref{eq:A3} fails already for $k=1$ by a logarithmic factor, since
\begin{align*}
\frac{|a'(t)|}{a(t)} =\frac{1}{1+t} \ \ \mbox{but}\ \
\frac{a(t)}{\Lambda(t)} =\frac{1}{(1+t)[1+\ln(1+t)]}.
\end{align*}

\begin{theorem}[Sharp estimate at the endpoint]\label{thm:endpoint}
Let $a(t)=(1+t)^{-1}$.  If $u_1\in L^2\cap L^1$ and $P_{u_1}\neq0$, then the solution to the linear wave equation \eqref{eq:main-problem} satisfies
\begin{align}\label{eq:endpoint-rate}
t(\ln t)^{-\frac n4}|P_{u_1}| \lesssim \|u(t,\cdot)\|_{L^2} \lesssim t(\ln t)^{-\frac n4}\|u_1\|_{L^2\cap L^1}
\end{align}
for all sufficiently large $t$.
\end{theorem}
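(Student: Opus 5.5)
Because the speed is an Euler-type coefficient, I will not go through the general machinery behind Theorems~\ref{thm:upper} and~\ref{thm:sharp}. As noted above, \eqref{eq:A3} fails at this endpoint, so those results are not available anyway. Instead I will use the explicit Fourier multiplier.

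\textbf{Explicit multiplier.} Set $\tau=1+t$. After the partial Fourier transform, $\widehat u(t,\xi)=v(\tau,|\xi|)\widehat u_1(\xi)$, where $v$ solves
\begin{align*}
v''+|\xi|^2\tau^{-2}v=0,\qquad v(1)=0,\quad v'(1)=1.
\end{align*}
This is an Euler equation with indicial roots $r_\pm=\frac12\pm\sqrt{\frac14-|\xi|^2}$. For $|\xi|\neq\frac12$,
\begin{align*}
v(\tau,|\xi|)=\frac{\tau^{r_+}-\tau^{r_-}}{r_+-r_-}.
\end{align*}
For $|\xi|>\frac12$ the roots are complex, and with $\mu=\sqrt{|\xi|^2-\frac14}$,
\begin{align*}
v=\tau^{1/2}\,\frac{\sin(\mu\ln\tau)}{\mu}.
\end{align*}
At $|\xi|=\frac12$ one has $v=\tau^{1/2}\ln\tau$.

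\textbf{Upper estimate.} I split the frequencies at a small fixed $\delta\in(0,\frac12)$.
\begin{itemize}
\item \emph{Low frequencies} $|\xi|\leqslant\delta$. Here $r_+-r_-\geqslant c>0$, $r_+=1-|\xi|^2+O(|\xi|^4)$ and $0<r_-\lesssim|\xi|^2$. Hence
\begin{align*}
|v|\lesssim\tau\,\mathrm e^{-c_0|\xi|^2\ln\tau}+\tau^{C\delta^2}.
\end{align*}
Use $|\widehat u_1|\leqslant\|u_1\|_{L^1}$ and integrate the square:
\begin{align*}
\int_{|\xi|\leqslant\delta}\tau^2\mathrm e^{-2c_0|\xi|^2\ln\tau}\,\mathrm d\xi\lesssim\tau^2(\ln\tau)^{-n/2}.
\end{align*}
The remainder term contributes only $\tau^{2C\delta^2}$, which is negligible.
\item \emph{Middle frequencies} $\delta\leqslant|\xi|\leqslant\frac12$. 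Here $r_+\leqslant1-c\delta^2$. Near $|\xi|=\frac12$ I avoid the vanishing denominator $r_+-r_-$ by the mean value theorem:
\begin{align*}
\frac{\tau^{r_+}-\tau^{r_-}}{r_+-r_-}\leqslant\tau^{r_+}\ln\tau .
\end{align*}
The same bound holds in the oscillatory region, via $|\sin(\mu\ln\tau)/\mu|\leqslant\ln\tau$. So $|v|\lesssim\tau^{1-c\delta^2}\ln\tau$ uniformly on $\delta\leqslant|\xi|\leqslant1$.
\item \emph{High frequencies} $|\xi|\geqslant1$. Here $|v|\lesssim\tau^{1/2}$.
\end{itemize}
Bounding the middle and high pieces with $\|u_1\|_{L^2}$, both are $o\bigl(\tau(\ln\tau)^{-n/4}\bigr)$. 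Together with the low-frequency piece this gives the upper bound in \eqref{eq:endpoint-rate}.

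\textbf{Lower estimate.} Since $u_1\in L^1$, $\widehat u_1$ is continuous with $\widehat u_1(0)=P_{u_1}$. Hence $|\widehat u_1(\xi)|\geqslant|P_{u_1}|/2$ for $|\xi|\leqslant\varepsilon$, with $\varepsilon$ depending on $u_1$. On the ball $|\xi|\leqslant(\ln\tau)^{-1/2}$, which lies inside $\{|\xi|\leqslant\varepsilon\}$ for large $t$, the following hold:
\begin{itemize}
\item $\tau^{r_+}\geqslant\tau^{1-|\xi|^2-C|\xi|^4}\geqslant c\tau$;
\item $\tau^{r_-}\leqslant\tau^{C|\xi|^2}\leqslant C$;
\item $r_+-r_-\leqslant1$.
\end{itemize}
Therefore $v\geqslant c\tau$ there, and
\begin{align*}
\|u(t,\cdot)\|_{L^2}^2\geqslant c\,\tau^2|P_{u_1}|^2(\ln\tau)^{-n/2}.
\end{align*}
Since $\tau\approx t$ and $\ln\tau\approx\ln t$ for large $t$, this is the lower bound in \eqref{eq:endpoint-rate}.

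\textbf{Main obstacle.} The only delicate point is uniform control of the multiplier across the degenerate frequency $|\xi|=\frac12$, where the two roots coalesce. The mean-value bound above handles it. Everything else is an elementary Gaussian-type integral.
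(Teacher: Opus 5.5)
Your proof is correct and follows essentially the same route as the paper: the paper also uses the explicit multiplier (written via $\theta=\ln(1+t)$ and the constant-coefficient ODE $v_{\theta\theta}-v_\theta+r^2v=0$, equivalent to your Euler equation in $\tau=1+t$), a three-zone split with the mean-value bound near the coalescence frequency $r=\frac12$, the Gaussian integral giving $t^2(\ln t)^{-n/2}$ at low frequencies, and a lower bound from the ball of radius of order $(\ln t)^{-1/2}$ on which the multiplier is comparable to $t$. The only differences are cosmetic, e.g.\ you keep the $\tau^{r_-}$ term separately rather than absorbing it, and you use $|v|\lesssim\tau^{1/2}$ for $|\xi|\geqslant1$ instead of the paper's cruder $t^{1/2}\ln t$ for $r\geqslant\frac12$.
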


\begin{remark}\label{rem:no-weight}
No weighted $L^1$ assumption is imposed in Theorem~\ref{thm:sharp} and Theorem~\ref{thm:endpoint}.  Indeed, $u_1\in L^1$ implies that $\widehat u_1$ is continuous at the origin, while $P_{u_1}\neq0$ implies $\widehat u_1(0)\neq0$. Thus $|\widehat u_1|$ is bounded from below on a sufficiently small ball. For the scalar wave equation with zero initial displacement and $P_{u_1}\neq0$, this observation avoids estimating the Fourier remainder $\widehat u_1(\xi)-\widehat u_1(0)$ in the low-frequency lower bounds of \cite{Chen-Ikehata=2026,Ikehata=2023}.
\end{remark}

\subsection{The sharp rate \texorpdfstring{$\ml G_{a,n}(t)$}{G(a,n,t)}}

For every $t\geqslant0$, since the two terms under the square root in \eqref{eq:general-rate} are nonnegative, we have
\begin{align*}
\ml G_{a,n}(t)&\approx t[\Lambda(t)]^{-\frac n2} +\left(\frac{\ml Q_{a,n}(t)}{a(t)}\right)^{\frac12}\\
&\approx \max\left\{ t[\Lambda(t)]^{-\frac n2}, \left(\frac{\ml Q_{a,n}(t)}{a(t)}\right)^{\frac12} \right\}.
\end{align*}
\begin{itemize}
	\item Under the assumptions of Theorem~\ref{thm:sharp}, the first term comes from the ball
$\{\xi\in\mb{R}^n:\ |\xi|\Lambda(t)\leqslant\varepsilon\}$ for a sufficiently small fixed $\varepsilon>0$. On this ball, the Fourier multiplier is comparable to $t$, and the measure of the ball is of order $[\Lambda(t)]^{-n}$.  As $t\rightarrow+\infty$, $t[\Lambda(t)]^{-\frac n2}$ grows if $\Lambda(t)=o(t^{\frac2n})$, remains of constant order if $\Lambda(t)\approx t^{\frac2n}$, and decays if $t^{\frac2n}=o(\Lambda(t))$. 
	\item The second term comes from the propagated amplitudes.
For $[\Lambda(t)]^{-1}\leqslant r\leqslant1$, let us define the entrance time $\tau_r\in[0,t]$ by $r\Lambda(\tau_r)=1$. The change of variables $r=[\Lambda(s)]^{-1}$ in \eqref{eq:general-Q} gives the exact identity:
	\begin{align*}
\frac{\ml Q_{a,n}(t)}{a(t)} = \int_{\frac1{\Lambda(t)}}^1 \left[ \sqrt{\frac{a(\tau_r)}{a(t)}}(1+\tau_r) \right]^2 r^{n-1}\,\mathrm dr.
	\end{align*}
The expression in brackets represents the amplitude propagated from the entrance time $\tau_r$ to the observation time $t$, with the oscillatory factor suppressed. Thus the second term of $\ml G_{a,n}(t)$ is the $L^2$ norm of the propagated amplitude in brackets with respect to the radial frequency measure $r^{n-1}\,\mathrm dr$. By integrating over frequency intervals, we obtain the lower bound estimate. The integrand defining $\ml Q_{a,n}(t)$ admits another useful expression
	\begin{align*}
\frac{(1+s)^2[a(s)]^2}{[\Lambda(s)]^{n+1}} = \left(\frac{\mathrm d\ln\Lambda(s)}{\mathrm d\ln(1+s)}\right)^2 [\Lambda(s)]^{1-n}.
	\end{align*}
Consequently, $\ml Q_{a,n}(t)$ depends on the logarithmic growth of the optical distance, weighted by $[\Lambda(s)]^{1-n}$.
\end{itemize}

The sharp estimate at the endpoint $a(t)=(1+t)^{-1}$ cannot be obtained by direct substitution in \eqref{eq:general-rate}. In this case, $\Lambda(t)=1+\ln(1+t)$ gives $\ml Q_{a,n}(t)\approx t(\ln t)^{-n-1}$ as $t\rightarrow+\infty$. Such a substitution would give $\ml G_{a,n}(t)\approx t(\ln t)^{-\frac n2}$.  Theorem~\ref{thm:endpoint} instead gives $t(\ln t)^{-\frac n4}$.  At this endpoint, the sharp lower bound is obtained from a ball of small frequencies with radius of order $(\ln t)^{-\frac12}$, rather than $[\Lambda(t)]^{-1}$. This change of frequency scale accounts for the different rate.

\subsection{Interpretation of the speed assumptions}
 \paragraph{\bfseries The condition \eqref{eq:A1}.} For $T>0$, we introduce the normalized speed profile
\begin{align*}
b_T(\sigma):=\frac{T}{\Lambda(T)}a(T\sigma)\ \ \mbox{for}\ \ 0<\sigma<1.
\end{align*}
By \eqref{eq:optical-distance}, $\Lambda(T)/T$ is asymptotic to the mean value of $a$ on $(0,T)$ as $T\rightarrow+\infty$.  Thus, $b_T$ is the propagation speed on the rescaled interval, normalized by $\Lambda(T)/T$. Indeed,
\begin{align*}
\int_0^1b_T(\sigma)\,\mathrm d\sigma =1-\frac{1}{\Lambda(T)}\rightarrow1 \ \ \mbox{as}\ \ T\rightarrow+\infty.
\end{align*}
With $s=T\sigma$ and $\varphi(T\sigma)=\psi(\sigma)$, the condition \eqref{eq:A1} is equivalent to
\begin{align*}
\int_0^1[b_T(\sigma)]^2|\psi(\sigma)|^2\,\mathrm d\sigma \lesssim \int_0^1|\psi'(\sigma)|^2\,\mathrm d\sigma \ \ \mbox{for}\ \ \psi\in H_0^1(0,1),
\end{align*}
with a constant independent of $T$. Thus, the family of weights $[b_T(\sigma)]^2$ satisfies a uniform weighted Poincar\'e inequality on $(0,1)$.

\medskip \paragraph{\bfseries The condition \eqref{eq:A2}.} In terms of $b_T$, the condition \eqref{eq:A2} reads
\begin{align*}
\sup_{T>0} \int_0^1 \sigma(1-\sigma)[b_T(\sigma)]^2\,\mathrm d\sigma <+\infty.
\end{align*}
Thus, \eqref{eq:A2} gives a uniform weighted $L^2$ bound for the normalized speed profiles. Replacing $\sigma(1-\sigma)$ by $1$ would lead to the stronger condition
\begin{align*}
T\int_0^T[a(s)]^2\,\mathrm ds \lesssim [\Lambda(T)]^2.
\end{align*}
For the non-integrable polynomial speeds $a(t)=(1+t)^\alpha$ with $\alpha\geqslant-1$, this stronger condition holds if and only if $\alpha>-\frac12$, whereas \eqref{eq:A2} holds throughout the range $\alpha\geqslant-1$.

The condition \eqref{eq:A2} is satisfied by every positive monotone speed.  To see this, we put
\begin{align*}
\Lambda_0(T)&:=\Lambda(T)-1=\int_0^Ta(s)\,\mathrm ds,\\
I_a(T)&:=\int_0^T(T-s)s[a(s)]^2\,\mathrm ds.
\end{align*}
\begin{itemize}
	\item If $a$ is nondecreasing, then
$(T-s)a(s)\leqslant\Lambda_0(T)-\Lambda_0(s)$ and
	\begin{align*}
I_a(T) &\leqslant T\int_0^Ta(s)[\Lambda_0(T)-\Lambda_0(s)]\,\mathrm ds=\frac{T}{2}[\Lambda_0(T)]^2.
	\end{align*}
	\item If $a$ is nonincreasing, then $sa(s)\leqslant\Lambda_0(s)$ and
	\begin{align*}
I_a(T) &\leqslant T\int_0^Ta(s)\Lambda_0(s)\,\mathrm ds=\frac{T}{2}[\Lambda_0(T)]^2.
	\end{align*}
\end{itemize}
Here we used $\Lambda_0'(s)=a(s)$ and $\Lambda_0(0)=0$ in both cases. Since $\Lambda_0(T)\leqslant\Lambda(T)$, the condition \eqref{eq:A2} follows in both cases. The same estimate holds whenever $a(t)\approx a_*(t)$ for some positive monotone speed $a_*$.  Hence, \eqref{eq:A2} permits bounded relative oscillations about a monotone profile. Their time scale is restricted separately by \eqref{eq:A3}.

\medskip \paragraph{\bfseries The condition \eqref{eq:A3}.} Let us introduce the following local time scale:
\begin{align*}
h(t):=\frac{\Lambda(t)}{a(t)}.
\end{align*}
Because of $\Lambda'(t)=a(t)$, one has
\begin{align*}
[h(t)]^{-1}=\frac{a(t)}{\Lambda(t)}=\frac{\mathrm d}{\mathrm dt}\ln\Lambda(t).
\end{align*}
Namely, $h(t)$ is the reciprocal of the logarithmic derivative of $\Lambda(t)$. We can rewrite condition \eqref{eq:A3} as
\begin{align*}
|a^{(k)}(t)| \lesssim a(t)[h(t)]^{-k} \ \ \mbox{for}\ \ k=1,\ldots,m.
\end{align*}
Each additional time derivative of $a$ is measured by one factor of $[h(t)]^{-1}$. Symbol estimates of this type, in which the derivatives of the propagation speed are measured against an effective time scale, are standard in the analysis of time-dependent hyperbolic equations \cite{Reissig-Yagdjian=2000-increasing, Reissig-Yagdjian=2000-oscillations,Hirosawa=2007, Hirosawa-Wirth=2009,Aslan-Reissig=2022}. Here the effective scale is fixed by the optical distance itself.  For polynomial, logarithmically modified, and log-periodically oscillating speeds, one has $h(t)\approx1+t$.  For the exponential-type speeds considered below, $h(t)\approx(1+t)^{1-\beta}$.  Thus, \eqref{eq:A3} does not require monotonicity, but it prevents order-one relative variations on time scales much shorter than $h(t)$.

\section{The general upper estimate}

By applying the partial Fourier transform with respect to the spatial variables, we obtain
\begin{align}\label{eq:kernel-representation}
\widehat u(t,\xi)\equiv K(t,|\xi|)\widehat u_1(\xi),
\end{align}
where $K=K(t,|\xi|)=K(t,r)$ solves
\begin{align}\label{eq:kernel-equation}
\begin{cases}
K_{tt}+[a(t)]^2r^2K=0,&r\geqslant 0,\ t>0,\\
K(0,r)=0,\ \  K_t(0,r)=1,&r\geqslant0.
\end{cases}
\end{align}
Since $a$ is positive and belongs to $\ml C^1\bigl([0,+\infty)\bigr)$, it is bounded away from zero on every compact time interval.  Standard well-posedness theory for linear strictly hyperbolic equations \cite{Ikawa=2000,Ebert-Reissig=2018} gives a unique energy solution on every finite time interval. For completeness, we set
$$\ml E(t,r):=|K_t(t,r)|^2+[a(t)]^2r^2|K(t,r)|^2.$$
The kernel equation yields
\begin{align*}
\partial_t\ml E(t,r)=2a(t)a'(t)r^2|K(t,r)|^2 \leqslant2\frac{|a'(t)|}{a(t)}\ml E(t,r).
\end{align*}
Since $\ml E(0,r)=1$, Gronwall's inequality gives
$$|K_t(t,r)|^2+r^2|K(t,r)|^2\leqslant C_T$$ for
$0\leqslant t\leqslant T$ and $r\geqslant0$. Together with $|K(t,r)|\leqslant\int_0^t|K_t(s,r)|\,\mathrm ds$, this bounds the $H^1\times L^2$ norm on every finite interval.  Hence,
\begin{align*}
u\in\ml C\bigl([0,+\infty),H^1\bigr) \cap\ml C^1\bigl([0,+\infty),L^2\bigr)
\end{align*}
whenever $u_1\in L^2$.  Our main purpose is to derive estimates for $K(t,r)$ for large time, with constants independent of $t$ and $r$.

\subsection{The low-frequency zone}

\begin{lemma}\label{lem:low-upper}
Assume that \eqref{eq:A1} and \eqref{eq:A3} with $m=1$ hold.  There exists $\varepsilon>0$ such that, for $t>0$ and
$r\Lambda(t)\leqslant\varepsilon$%
, one has
\begin{align}\label{eq:low-upper-K}
0<K(t,r)&\leqslant t,\\
|K_t(t,r)|&\label{eq:low-upper-Kt} \lesssim 1+\frac{t\,a(t)}{\Lambda(t)}.
\end{align}
\end{lemma}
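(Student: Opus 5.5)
The plan is to prove the three claims in order: positivity of $K$ from \eqref{eq:A1} via a Sturm-type argument, then $K\leqslant t$ by concavity, then the bound on $K_t$ from the integrated kernel equation. The last step is where \eqref{eq:A3} with $m=1$ enters.

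\emph{Positivity.} I take $\varepsilon>0$ with $C\varepsilon^2<1$, where $C$ is the constant in \eqref{eq:A1}. Since $\Lambda$ is increasing, $r\Lambda(t)\leqslant\varepsilon$ gives $r\Lambda(T)\leqslant\varepsilon$ for every $T\leqslant t$. Because $K(0,r)=0$ and $K_t(0,r)=1$, $K$ is positive on some interval $(0,\delta)$. Suppose $K$ had a first zero $T\in(0,t]$. Then $\varphi:=K(\cdot,r)|_{[0,T]}$ lies in $H_0^1(0,T)$. Multiplying the kernel equation by $K$ and integrating by parts, with both boundary terms vanishing, gives
\begin{align*}
\int_0^T|K_s|^2\,\mathrm ds=r^2\int_0^T[a(s)]^2|K|^2\,\mathrm ds\leqslant Cr^2[\Lambda(T)]^2\int_0^T|K_s|^2\,\mathrm ds\leqslant C\varepsilon^2\int_0^T|K_s|^2\,\mathrm ds .
\end{align*}
This forces $K_s\equiv0$ on $[0,T]$, which contradicts $K_t(0,r)=1$. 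Hence $K>0$ on $(0,t]$.

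\emph{The bound $K\leqslant t$.} Since $K>0$, we have $K_{tt}=-a^2r^2K<0$ on $(0,t]$. So $K_t$ is decreasing and $K_t\leqslant1$, and integrating gives $K(t,r)\leqslant t$.

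\emph{The bound on $K_t$.} From the equation,
\begin{align*}
K_t(t,r)=1-r^2\int_0^t[a(s)]^2K(s,r)\,\mathrm ds .
\end{align*}
Using $0<K(s,r)\leqslant s$ and $r\leqslant\varepsilon/\Lambda(t)$, this yields
\begin{align*}
|K_t(t,r)|\leqslant1+\frac{\varepsilon^2}{[\Lambda(t)]^2}\int_0^ts[a(s)]^2\,\mathrm ds .
\end{align*}
It therefore suffices to prove the weighted bound
\begin{align}\label{eq:plan-key}
\int_0^ts[a(s)]^2\,\mathrm ds\lesssim \Lambda(t)\bigl(1+t\,a(t)\bigr).
\end{align}
The extra $1$ on the right-hand side is harmless, since it becomes the $1$ in \eqref{eq:low-upper-Kt}.

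\emph{The main obstacle is \eqref{eq:plan-key}.} A plain integration by parts, $\int_0^ts a^2=\int_0^t sa\,\mathrm d\Lambda$, returns a term $\int sa'\Lambda$ that is of the same size as the original under \eqref{eq:A3}, so it does not close. Instead I will split $[0,t]$ at $t_*:=\max\{0,\,t-c\,h(t)\}$, where $h=\Lambda/a$ and $c>0$ is small.

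On $[t_*,t]$, \eqref{eq:A3} with $m=1$ says $|(\ln a)'|\lesssim(\ln\Lambda)'=1/h$. Since this interval has length at most $c\,h(t)$, both $a$ and $\Lambda$ are comparable there to their values at $t$. Consequently
\begin{align*}
\int_{t_*}^ts a^2\,\mathrm ds\lesssim t\,[a(t)]^2h(t)=t\,a(t)\Lambda(t).
\end{align*}

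On $[0,t_*]$, I will apply \eqref{eq:A1} on $(0,t)$ with the piecewise-linear test function $\varphi(s)=\min\{\sqrt{s\,},\,\cdot\,\}$-type profile. More concretely, I take $\varphi(s)=\min\{1,(t-s)/(c\,h(t))\}\sqrt{s}$, suitably regularised near $0$ so that $\varphi\in H_0^1$ and $\int_0^t|\varphi'|^2$ is controlled. The target is that \eqref{eq:A1} then gives
\begin{align*}
\int_0^{t_*}s\,a^2\,\mathrm ds\lesssim[\Lambda(t)]^2\bigl(\log\text{-free bound on }\textstyle\int|\varphi'|^2\bigr)\lesssim \Lambda(t)\,t\,a(t).
\end{align*}
If the $\sqrt{s}$ weight produces a logarithm, I would fall back on a dyadic decomposition $s\in[2^{-j-1}t_*,2^{-j}t_*]$. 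On each dyadic piece I would apply \eqref{eq:A1} to a bump function, and then use \eqref{eq:A3} in the form $a(s)/a(t)\lesssim(\Lambda(t)/\Lambda(s))^{C}$ to sum the pieces.

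If neither device closes the estimate, \eqref{eq:low-upper-Kt} will only hold up to this step. In that case I would look for an identity that exploits the sign structure $K_t\leqslant1$ together with $K\leqslant t$ more cleverly.
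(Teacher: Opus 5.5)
Your positivity argument and the bound $0<K\leqslant t$ are correct and coincide with the paper's. The proof of \eqref{eq:low-upper-Kt}, however, is not complete. You reduce everything to the weighted bound $\int_0^t s[a(s)]^2\,\mathrm ds\lesssim\Lambda(t)\bigl(1+t\,a(t)\bigr)$, and neither device you propose proves it.

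\emph{The single test function.} A profile $\varphi\approx\sqrt{s}$ cannot be made log-free. By Hardy's inequality $\int_0^t|\varphi(s)|^2s^{-2}\,\mathrm ds\leqslant4\int_0^t|\varphi'(s)|^2\,\mathrm ds$, any $\varphi$ with $\varphi(0)=0$ and $|\varphi(s)|^2\geqslant s$ on $[1,t/2]$ has $\int_0^t|\varphi'|^2\,\mathrm ds\geqslant\frac14\ln\frac{t}{2}$. So one application of \eqref{eq:A1} at time $t$ gives at best $[\Lambda(t)]^2\ln t+t\,a(t)\Lambda(t)$. For $a(t)=(1+t)^\alpha$, where $t\,a(t)\approx\Lambda(t)$, this is off by a factor $\ln t$.

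\emph{The dyadic fallback.} The summation device you name does not work. The inequality $a(s)/a(t)\lesssim(\Lambda(t)/\Lambda(s))^C$ deteriorates as $s$ decreases, so it does not make the blocks summable. Applying \eqref{eq:A1} to each block at the final time $t$ costs $[\Lambda(t)]^2$ per block, which is again a $\ln t$ loss. The route can be repaired as follows. For the block $[2^j,2^{j+1}]$ with $2^{j+2}\leqslant t/2$, apply \eqref{eq:A1} at the intermediate time $T=2^{j+2}$ to get $\int_{2^j}^{2^{j+1}}s[a(s)]^2\,\mathrm ds\lesssim[\Lambda(2^{j+2})]^2$. Then sum using the geometric growth $\Lambda(2s)\geqslant\kappa\Lambda(s)$ with $\kappa>1$ for $s\geqslant1$, which follows from $\Lambda/a\lesssim1+s$. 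Handle the top block $[t/8,t]$ by one bump that ramps down on $[t,T]$ with $\Lambda(T)=2\Lambda(t)$. You cannot simply run the dyadic sum up to $t$, since $\Lambda(4t)/\Lambda(t)$ is unbounded for exponential-type speeds. As written, you leave open whether this step closes, so the lemma is not proved.

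The paper needs no moment bound on $a^2$ at all. The missing idea is to prove positivity beyond $t$. If $4C\varepsilon^2<1$, your Sturm argument gives $K(\cdot,r)>0$ on $(0,T]$ with $\Lambda(T)=2\Lambda(t)$, because $r\Lambda(T_0)\leqslant2\varepsilon$ at any first zero $T_0\leqslant T$. Concavity on $[0,T]$ then gives $0<K(T,r)\leqslant K(t,r)+(T-t)K_t(t,r)$, hence $K_t(t,r)>-t/(T-t)$. By \eqref{eq:A3} with $k=1$, $a(s)\approx a(t)$ on $[t,T]$, so $T-t\approx\Lambda(t)/a(t)$. Together with $K_t\leqslant1$ this is exactly \eqref{eq:low-upper-Kt}. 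The lower bound on $K_t$ comes from the sign of $K$ at a later time, not from estimating $r^2\int_0^t[a(s)]^2K(s,r)\,\mathrm ds$.
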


\begin{proof}
Let us fix $(t,r)$ with $t>0$ and $r\Lambda(t)\leqslant\varepsilon$.  Since $K(t,0)=t$, it remains to consider $r>0$.  We choose $T>t$ such that
\begin{align*}
\Lambda(T)=2\Lambda(t).
\end{align*}
Suppose that $K(\cdot,r)$ has a first positive zero $T_0\leqslant T$. Multiplying \eqref{eq:kernel-equation} by $K$ and integrating over $(0,T_0)$, we obtain
\begin{align*}
\int_0^{T_0}|K_t(s,r)|^2\,\mathrm ds =r^2\int_0^{T_0}[a(s)]^2|K(s,r)|^2\,\mathrm ds.
\end{align*}
The condition \eqref{eq:A1} then yields
\begin{align*}
\int_0^{T_0}|K_t(s,r)|^2\,\mathrm ds &\leqslant C r^2[\Lambda(T_0)]^2 \int_0^{T_0}|K_t(s,r)|^2\,\mathrm ds\\
&\leqslant 4C\varepsilon^2 \int_0^{T_0}|K_t(s,r)|^2\,\mathrm ds.
\end{align*}
By choosing $\varepsilon$ sufficiently small so that $4C\varepsilon^2<1$, we obtain a contradiction. Therefore,
\begin{align*}
K(s,r)>0 \ \ \mbox{for}\ \ 0<s\leqslant T.
\end{align*}
It follows from \eqref{eq:kernel-equation} that $K(\cdot,r)$ is concave on $[0,T]$.  Since $K(0,r)=0$ and $K_t(0,r)=1$, we obtain
\begin{align*}
0<K(s,r)\leqslant s \ \ \mbox{and}\ \
K_t(s,r)\leqslant1 \ \ \mbox{for}\ \ 0<s\leqslant T.
\end{align*}

By concavity,
\begin{align*}
0<K(T,r) \leqslant K(t,r)+(T-t)K_t(t,r),
\end{align*}
and hence
\begin{align*}
K_t(t,r)>-\frac{K(t,r)}{T-t} \geqslant-\frac{t}{T-t}.
\end{align*}
The condition \eqref{eq:A3} with $k=1$ gives
\begin{align*}
\left|\ln\frac{a(s)}{a(t)}\right| \lesssim \ln\frac{\Lambda(s)}{\Lambda(t)} \lesssim1 \ \ \mbox{for}\ \ t\leqslant s\leqslant T,
\end{align*}
which shows  $a(s)\approx a(t)$ on $[t,T]$.  Thanks to
\begin{align*}
\int_t^T a(s)\,\mathrm ds =\Lambda(T)-\Lambda(t) =\Lambda(t),
\end{align*}
we have
\begin{align*}
T-t\approx\frac{\Lambda(t)}{a(t)}.
\end{align*}
Combining these estimates proves \eqref{eq:low-upper-K} and \eqref{eq:low-upper-Kt}.
\end{proof}

\subsection{Propagation in the hyperbolic zone}

For $r>0$, we introduce $\tau=r\Lambda(t)$  and the micro-energy
\begin{align}\label{eq:micro-energy}
W(t,r) :=
\begin{pmatrix}
\sqrt{a(t)}\,rK(t,r)\\[1mm]
[a(t)]^{-\frac12}K_t(t,r)
\end{pmatrix}.
\end{align}
In the optical variable, we write $W(\tau,r)$ for $W\bigl(\Lambda^{-1}(\tau/r),r\bigr)$.  This expression is defined for $\tau\geqslant r$, and all $\tau$-derivatives are taken with $r$ fixed. Since $\partial_t\tau=a(t)r$, the kernel equation gives
\begin{align}\label{eq:W-system}
\partial_\tau W =\bigl(J+b(\tau,r)D\bigr)W,
\end{align}
where
\begin{align*}
J:=
\begin{pmatrix}0&1\\-1&0\end{pmatrix}
\ \ \mbox{and}\ \
D:=
\begin{pmatrix}1&0\\0&-1\end{pmatrix}
\end{align*}
and
\begin{align}\label{eq:def-b}
b(\tau,r) :=\frac{a'(t)}{2[a(t)]^2r}.
\end{align}

\begin{lemma}\label{lem:hyperbolic-propagation}
Assume that \eqref{eq:A3} holds with $m=2$.  For every fixed $\varepsilon>0$,
\begin{align}\label{eq:W-upper-propagation}
|W(\tau,r)| \lesssim |W(\tau_*,r)| \ \ \mbox{for}\ \
\tau\geqslant\tau_*\geqslant\max\{\varepsilon,r\},
\end{align}
with a constant independent of $r$, $\tau$, and $\tau_*$.
\end{lemma}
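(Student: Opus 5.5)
The plan is to treat \eqref{eq:W-system} as a perturbation of the rotation $\partial_\tau W=JW$. One diagonalization step will make the perturbation integrable in $\tau$, and a Gronwall argument then finishes the proof. The first step is to bound $b$ and $\partial_\tau b$ using \eqref{eq:A3} with $m=2$.

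\textbf{Bounds on $b$ and $\partial_\tau b$.} Since $|a'(t)|\lesssim [a(t)]^2/\Lambda(t)$, the definition \eqref{eq:def-b} gives
\begin{align*}
|b(\tau,r)|\lesssim \frac{1}{r\Lambda(t)}=\frac1\tau .
\end{align*}
For the derivative we use $\partial_\tau=(a(t)r)^{-1}\partial_t$ together with
\begin{align*}
\partial_t b=\frac{1}{2r}\Bigl(\frac{a''}{a^2}-\frac{2(a')^2}{a^3}\Bigr).
\end{align*}
The case $k=2$ of \eqref{eq:A3} gives $|a''|/a^2+(a')^2/a^3\lesssim a/\Lambda^2$. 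Hence
\begin{align*}
|\partial_\tau b(\tau,r)|\lesssim \frac{1}{a r}\cdot\frac{a}{r\Lambda^2}=\frac{1}{\tau^2}.
\end{align*}
All implicit constants here are independent of $r$. So $b$ itself is only $O(\tau^{-1})$, which is not integrable, but $b^2$ and $\partial_\tau b$ are $O(\tau^{-2})$. This explains why one correction step is needed.

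\textbf{Diagonalization step.} The matrices satisfy $JD=-DJ$ and $J^2=-I$. Put $N:=-\tfrac12 DJ$. A direct computation gives $JN-NJ=D$, so $N$ removes the $bD$ term to first order. Choose $\tau_0\geqslant1$, depending only on the constants above, such that $|b|\,\|N\|\leqslant\frac12$ for $\tau\geqslant\tau_0$. On that range set $W=(I+bN)V$. Substituting into \eqref{eq:W-system} gives
\begin{align*}
\partial_\tau V=JV+R(\tau,r)V,
\end{align*}
where
\begin{align*}
R=(I+bN)^{-1}\bigl[\,bD+b(JN-NJ)\cdot(-1)+b^2DN-(\partial_\tau b)N\,\bigr].
\end{align*}
The choice of $N$ cancels the first-order terms. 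The remainder therefore satisfies $\|R\|\lesssim b^2+|\partial_\tau b|\lesssim\tau^{-2}$, uniformly in $r$. (The exact sign convention for $N$ will be fixed so that the linear terms indeed cancel.)

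\textbf{Gronwall argument.} Since $e^{-J\tau}$ is orthogonal, the vector $Z=e^{-J\tau}V$ satisfies $|\partial_\tau Z|\leqslant\|R\|\,|Z|$. Gronwall's inequality then yields
\begin{align*}
|V(\tau)|\leqslant \exp\Bigl(C\int_{\tau_1}^\infty\sigma^{-2}\,\mathrm d\sigma\Bigr)|V(\tau_1)|\lesssim|V(\tau_1)| \quad\text{for } \tau\geqslant\tau_1\geqslant\tau_0 .
\end{align*}
Because $\|I+bN\|$ and $\|(I+bN)^{-1}\|$ are bounded above by absolute constants for $\tau\geqslant\tau_0$, this transfers to $|W(\tau)|\lesssim|W(\tau_1)|$.

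\textbf{Bounded range $\max\{\varepsilon,r\}\leqslant\tau\leqslant\tau_0$.} On this range no diagonalization is needed; we use the crude energy bound directly. Since $J$ is skew-symmetric and $\|D\|=1$,
\begin{align*}
\partial_\tau|W|^2=2b\,W^{\mathsf T}DW,\qquad\text{so}\qquad |\partial_\tau|W||\leqslant|b|\,|W|\leqslant\frac{C}{\tau}|W| .
\end{align*}
Integrating from $\tau_*$ to $\tau$ gives $|W(\tau)|\leqslant(\tau/\tau_*)^C|W(\tau_*)|\leqslant(\tau_0/\varepsilon)^C|W(\tau_*)|$. This is where the lower bound $\tau_*\geqslant\varepsilon$ is essential, and the resulting constant depends on $\varepsilon$. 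The condition $\tau_*\geqslant r$ only guarantees $t\geqslant0$, so that $W(\tau_*,r)$ is defined.

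\textbf{Conclusion.} For general $\tau\geqslant\tau_*\geqslant\max\{\varepsilon,r\}$ we concatenate the two regimes at $\tau_0$ when $\tau_*<\tau_0<\tau$, and use a single regime otherwise. This gives \eqref{eq:W-upper-propagation} with a constant independent of $r$, $\tau$ and $\tau_*$.

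The main obstacle is the diagonalization step. One has to check carefully that the choice of $N$ cancels the $O(\tau^{-1})$ term exactly, and that every remaining term, including $(\partial_\tau b)N$, is $O(\tau^{-2})$ uniformly in $r$. The latter relies precisely on the $k=2$ case of \eqref{eq:A3}.
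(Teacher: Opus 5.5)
Your proposal is correct and is the paper's proof written in the unrotated frame. Your constant corrector $N=-\frac12 DJ$ equals $\exp(\tau J)N(\tau)\exp(-\tau J)$, where $N(\tau)$ is the paper's bounded primitive of $\exp(-\tau J)D\exp(\tau J)$; hence your $Z=\exp(-\tau J)V$ is exactly the paper's $Z$, and your bounded-range Gronwall step matches theirs.

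One small correction: for your $N$ one has $JN-NJ=-D$, not $D$. That is exactly what the correct expansion $bD+b(JN-NJ)+b^2DN-(\partial_\tau b)N$ requires, so your two sign slips cancel and the remainder is $(I+bN)^{-1}\bigl(b^2DN-(\partial_\tau b)N\bigr)=O(\tau^{-2})$.
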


\begin{proof}
We apply the normal-form reduction to \eqref{eq:W-system}.  Let
\begin{align*}
\mu(t):=\frac{\Lambda(t)\,a'(t)}{[a(t)]^2}\ \ \Rightarrow \ \ b(\tau,r)=\frac{\mu(t)}{2\tau}.
\end{align*}
By using condition \eqref{eq:A3} with $k\in\{1,2\}$, we obtain
\begin{align}\label{eq:b-upper-symbols}
|b(\tau,r)|\lesssim\tau^{-1} \ \ \mbox{and}\ \
|\partial_\tau b(\tau,r)|\lesssim\tau^{-2}.
\end{align}
Indeed, direct differentiation gives
\begin{align*}
\partial_\tau b(\tau,r) =\frac{1}{2a(t)r^2} \left( \frac{a''(t)}{[a(t)]^2} -2\frac{[a'(t)]^2}{[a(t)]^3} \right).
\end{align*}

Let us set $R(\tau):=\exp(\tau J)$ and write $W=RY$.  Then
\begin{align*}
Y_\tau=b(\tau,r)M(\tau)Y \ \ \mbox{where}\ \
M(\tau):=R(-\tau)DR(\tau).
\end{align*}
The matrix $M$ has a bounded primitive $N$, namely
\begin{align*}
N(\tau) :=\frac12
\begin{pmatrix}
\sin(2\tau)&-\cos(2\tau)\\
-\cos(2\tau)&-\sin(2\tau)
\end{pmatrix}
\ \ \mbox{and}\ \ N'(\tau)=M(\tau).
\end{align*}
We now choose $\tau_0$ so large that $I+bN$ is uniformly invertible for $\tau\geqslant\tau_0$, and set $Y=(I+bN)Z$.  A direct calculation gives
\begin{align*}
Z_\tau =(I+bN)^{-1}\bigl(-\partial_\tau b\,N+b^2MN\bigr)Z.
\end{align*}
By \eqref{eq:b-upper-symbols}, the operator norm of the coefficient matrix on the right-hand side is bounded by $C\tau^{-2}$.  Applying Gronwall's inequality, we obtain \eqref{eq:W-upper-propagation} on $[\max\{\tau_0,r\},+\infty)$.  If the remaining interval $[\max\{\varepsilon,r\},\tau_0]$ is nonempty, we apply Gronwall's inequality again, using $|b(\tau,r)|\lesssim\varepsilon^{-1}$ on this interval.  The estimates for the fundamental matrices on the two intervals are uniform with respect to their initial points, which proves the assertion for every $\tau_*\geqslant\max\{\varepsilon,r\}$.
\end{proof}

For $0<r\leqslant\varepsilon$, let $t_r$ be determined by
\begin{align}\label{eq:entrance-time-upper}
r\Lambda(t_r)=\varepsilon.
\end{align}
The condition \eqref{eq:A3} with $k=1$ and the identity
\begin{align*}
\left(\frac{\Lambda(t)}{a(t)}\right)' =1-\frac{\Lambda(t)a'(t)}{[a(t)]^2}
\end{align*}
give
\begin{align*}
\frac{\Lambda(t)}{a(t)}\lesssim1+t.
\end{align*}
Consequently, Lemma~\ref{lem:low-upper} and continuity at $t_r=0$ give
\begin{align}\label{eq:entrance-energy-upper}
|W(t_r,r)| \lesssim \sqrt{a(t_r)}\,r(1+t_r).
\end{align}
Applying Lemma~\ref{lem:hyperbolic-propagation} to \eqref{eq:entrance-energy-upper} yields
\begin{align}\label{eq:middle-K-upper}
|K(t,r)| \lesssim \sqrt{\frac{a(t_r)}{a(t)}}(1+t_r) \ \ \mbox{for}\ \
\frac{\varepsilon}{\Lambda(t)}\leqslant r\leqslant\varepsilon.
\end{align}

For $r\geqslant\varepsilon$, the initial value of $\tau$ is $\tau_*=r\Lambda(0)=r$, and we have
\begin{align*}
W(0,r)=
\begin{pmatrix}0\\a(0)^{-\frac12}\end{pmatrix}.
\end{align*}
Lemma~\ref{lem:hyperbolic-propagation} therefore gives
\begin{align}\label{eq:high-K-upper}
|K(t,r)| \lesssim \frac{1}{r\sqrt{a(t)}} \ \ \mbox{for}\ \ r\geqslant\varepsilon.
\end{align}

Combining \eqref{eq:low-upper-K}, \eqref{eq:middle-K-upper}, and \eqref{eq:high-K-upper}, we obtain the following bounds.

\begin{lemma}
Under the assumptions of Theorem~\ref{thm:upper},
\begin{align}\label{eq:kernel-upper-three-zones}
|K(t,r)| \lesssim
\begin{cases}
t&\text{if}\ \ 0\leqslant r\leqslant\frac{\varepsilon}{\Lambda(t)},\\[2mm]
\displaystyle \sqrt{\frac{a(t_r)}{a(t)}}(1+t_r) &\text{if}\ \ \frac{\varepsilon}{\Lambda(t)}\leqslant r\leqslant\varepsilon,\\[3mm]
\displaystyle \frac{1}{r\sqrt{a(t)}}&\text{if}\ \ r\geqslant\varepsilon,
\end{cases}
\end{align}
where $t_r$ is defined by \eqref{eq:entrance-time-upper}.
\end{lemma}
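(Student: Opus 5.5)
The three-zone bound is essentially a bookkeeping consequence of what has already been proved, so the plan is to verify each zone separately and then check that the pieces match at the interfaces. Throughout, I use the kernel $K$ from \eqref{eq:kernel-equation}, the micro-energy $W$ from \eqref{eq:micro-energy}, and the entrance time $t_r$ from \eqref{eq:entrance-time-upper}. The assumptions of Theorem~\ref{thm:upper}, namely \eqref{eq:A1} and \eqref{eq:A3} with $m=2$, contain those of Lemma~\ref{lem:low-upper} (which needs $m=1$) and those of Lemma~\ref{lem:hyperbolic-propagation} (which needs $m=2$). Hence both lemmas are available with a common $\varepsilon>0$, fixed as in Lemma~\ref{lem:low-upper}.

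\emph{First zone.} Take $0\leqslant r\leqslant\varepsilon/\Lambda(t)$, so that $r\Lambda(t)\leqslant\varepsilon$. Then \eqref{eq:low-upper-K} gives $0<K(t,r)\leqslant t$ directly; at $r=0$ we simply use $K(t,0)=t$.

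\emph{Second zone.} Take $\varepsilon/\Lambda(t)\leqslant r\leqslant\varepsilon$. Since $\Lambda$ is increasing with $\Lambda(0)=1$, the entrance time $t_r\in[0,t]$ is well defined, and at $t_r$ the optical variable equals $\tau_*=\varepsilon\geqslant r$. The key input is the entrance bound \eqref{eq:entrance-energy-upper}. It follows from Lemma~\ref{lem:low-upper} at $s=t_r$, which gives $|K|\leqslant t_r$ and $|K_t|\lesssim 1+t_r a(t_r)/\Lambda(t_r)$, combined with $\Lambda/a\lesssim 1+t$. That last inequality comes from integrating $(\Lambda/a)'=1-\mu$ with $|\mu|\lesssim1$, which holds by \eqref{eq:A3} with $k=1$. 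Consequently
\begin{align*}
|a(t_r)^{-1/2}K_t(t_r,r)|\lesssim a(t_r)^{-1/2}\Bigl(1+\frac{t_r a(t_r)}{\Lambda(t_r)}\Bigr).
\end{align*}
Writing $1=r\Lambda(t_r)/\varepsilon$, this is $\lesssim \sqrt{a(t_r)}\,r\bigl(\Lambda(t_r)/a(t_r)+t_r\bigr)\lesssim\sqrt{a(t_r)}\,r(1+t_r)$. Next, Lemma~\ref{lem:hyperbolic-propagation} applied with $\tau_*=\varepsilon$ and $\tau=r\Lambda(t)\geqslant\varepsilon$ yields $|W(t,r)|\lesssim|W(t_r,r)|$. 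Reading off the first component, $\sqrt{a(t)}\,r|K(t,r)|\leqslant|W(t,r)|$, and dividing by $\sqrt{a(t)}\,r$ gives \eqref{eq:middle-K-upper}.

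\emph{Third zone.} Take $r\geqslant\varepsilon$. The starting optical time is $\tau_*=r\geqslant\max\{\varepsilon,r\}$, and $|W(0,r)|=a(0)^{-1/2}$ is a fixed constant. Lemma~\ref{lem:hyperbolic-propagation} then gives $\sqrt{a(t)}\,r|K(t,r)|\lesssim1$, which is \eqref{eq:high-K-upper}.

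The main point needing care is the uniformity of constants in the second zone. Two things must hold. First, the bound of Lemma~\ref{lem:hyperbolic-propagation} must be independent of the starting point $\tau_*=\varepsilon$ and of $r\leqslant\varepsilon$; this is asserted in that lemma. Second, the low-frequency bound on $K_t$ must be converted into the form $\sqrt{a(t_r)}\,r(1+t_r)$ using only $r\Lambda(t_r)=\varepsilon$ and $\Lambda/a\lesssim1+t$.

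The boundary cases are harmless. When $r$ is close to $\varepsilon$, so $t_r$ is near $0$, continuity of $K$ and of $a$ handles it. At the interfaces $r=\varepsilon/\Lambda(t)$ and $r=\varepsilon$ both adjacent bounds apply, and no matching is needed because the claim is only an upper bound in each zone.
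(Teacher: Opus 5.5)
Your proposal is correct and follows the paper's argument. The low zone comes from \eqref{eq:low-upper-K}. In the middle zone you bound $W(t_r,r)$ via Lemma~\ref{lem:low-upper} and $\Lambda/a\lesssim 1+t$, then propagate from $\tau_*=\varepsilon$ with Lemma~\ref{lem:hyperbolic-propagation}. In the high zone you propagate $W(0,r)$ from $\tau_*=r$. You also spell out the conversion of the $K_t$ bound into $\sqrt{a(t_r)}\,r(1+t_r)$ using $r\Lambda(t_r)=\varepsilon$, a step the paper leaves implicit in \eqref{eq:entrance-energy-upper}.
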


\subsection{Proof of Theorem~\ref{thm:upper}}

By applying Plancherel's theorem to \eqref{eq:kernel-representation}, we obtain
\begin{align*}
\|u(t,\cdot)\|_{L^2}^2 =\int_{\mb R^n}|K(t,|\xi|)|^2|\widehat u_1(\xi)|^2\,\mathrm d\xi.
\end{align*}
In the low-frequency zone, \eqref{eq:kernel-upper-three-zones} gives
\begin{align}\label{eq:J-low-upper}
\int_{|\xi|\leqslant\frac{\varepsilon}{\Lambda(t)}} |K(t,|\xi|)|^2|\widehat u_1(\xi)|^2\,\mathrm d\xi \lesssim t^2[\Lambda(t)]^{-n}\|u_1\|_{L^1}^2.
\end{align}

For the middle-frequency zone, we use polar coordinates and set $s=t_r$.  Since
\begin{align*}
r=\frac{\varepsilon}{\Lambda(s)} \ \ \mbox{and}\ \
|\mathrm dr| =\frac{\varepsilon a(s)}{[\Lambda(s)]^2}\,\mathrm ds,
\end{align*}
we obtain
\begin{align}\label{eq:J-middle-upper}
\int_{\frac{\varepsilon}{\Lambda(t)}\leqslant|\xi|\leqslant\varepsilon} |K(t,|\xi|)|^2|\widehat u_1(\xi)|^2\,\mathrm d\xi\lesssim \frac{\|u_1\|_{L^1}^2}{a(t)} \int_0^t \frac{(1+s)^2[a(s)]^2}{[\Lambda(s)]^{n+1}}\,\mathrm ds.
\end{align}

Finally, \eqref{eq:high-K-upper} yields
\begin{align}\label{eq:J-high-upper}
\int_{|\xi|\geqslant\varepsilon} |K(t,|\xi|)|^2|\widehat u_1(\xi)|^2\,\mathrm d\xi \lesssim \frac{1}{a(t)}\|u_1\|_{L^2}^2.
\end{align}
For $t\geqslant1$, restricting the integral in \eqref{eq:general-Q} to $[0,1]$ gives a positive lower bound for $\ml Q_{a,n}(t)$.  Therefore, the right-hand side of \eqref{eq:J-high-upper} is bounded by $C \ml Q_{a,n}(t)[a(t)]^{-1}\|u_1\|_{L^2}^2$.  Combining \eqref{eq:J-low-upper}, \eqref{eq:J-middle-upper}, and \eqref{eq:J-high-upper} proves \eqref{eq:general-upper}.

\section{The matching lower estimate}

To prove Theorem~\ref{thm:sharp}, let us first derive estimates for $K(t,r)-t$ and $K_r(t,r)$ at the entrance to the hyperbolic zone.

\subsection{The Volterra approximation at the entrance time}

For $T>0$, let $X_T$ be the Banach space of continuous functions $f$ on $[0,T]$ satisfying $f(0)=0$ and having finite norm
\begin{align*}
\|f\|_{X_T}:=\sup_{0<s\leqslant T}\frac{|f(s)|}{s}.
\end{align*}
Define
\begin{align*}
(\Phi_r f)(t) :=r^2\int_0^t(t-s)[a(s)]^2f(s)\,\mathrm ds.
\end{align*}
The condition \eqref{eq:A2} gives
\begin{align}\label{eq:Phi-bound}
\|\Phi_r f\|_{X_T} \leqslant C r^2[\Lambda(T)]^2\|f\|_{X_T}.
\end{align}
Indeed, the same estimate with $T$ replaced by $t$ follows directly from \eqref{eq:A2}, and $\Lambda(t)\leqslant\Lambda(T)$ for $t\leqslant T$. The kernel equation is equivalent to
\begin{align}\label{eq:Volterra-K}
(I+\Phi_r)K=e \ \ \mbox{where}\ \ e(t):=t.
\end{align}

We shall also use the following consequence of the symbol assumptions.

\begin{lemma}\label{lem:elementary-symbol-consequences}
Suppose that \eqref{eq:A2} and \eqref{eq:A3} with $m=1$ hold.  Then
\begin{align}\label{eq:h-linear-upper}
\frac{\Lambda(t)}{a(t)}\lesssim1+t
\end{align}
and, for $t\geqslant1$,
\begin{align}\label{eq:first-a-moment}
\int_0^t s[a(s)]^2\,\mathrm ds \lesssim t\,a(t)\Lambda(t).
\end{align}
\end{lemma}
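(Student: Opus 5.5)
The two estimates have quite different characters. I plan to obtain \eqref{eq:h-linear-upper} by integrating a differential inequality. For \eqref{eq:first-a-moment} I will combine \eqref{eq:A2} on a doubled interval with a local comparability of $a$ coming from \eqref{eq:A3}.

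For \eqref{eq:h-linear-upper}, set $h=\Lambda/a$. As already noted before \eqref{eq:entrance-energy-upper}, we have
\begin{align*}
h'(t)=1-\frac{\Lambda(t)a'(t)}{[a(t)]^2},
\end{align*}
and \eqref{eq:A3} with $k=1$ gives $|a'|\lesssim a^2/\Lambda$. Hence $|h'(t)|\leqslant 1+C$, and integrating yields $h(t)\leqslant h(0)+(1+C)t\lesssim 1+t$, since $h(0)=1/a(0)$ is a fixed constant. This step is routine.

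For \eqref{eq:first-a-moment}, apply \eqref{eq:A2} with $T=2t$. For $s\in[0,t]$ we have $T-s\geqslant t$, so
\begin{align*}
t\int_0^t s[a(s)]^2\,\mathrm ds\leqslant \int_0^{2t}(2t-s)s[a(s)]^2\,\mathrm ds\lesssim t[\Lambda(2t)]^2 .
\end{align*}
It therefore suffices to show $[\Lambda(2t)]^2\lesssim t\,a(t)\Lambda(t)$ for $t\geqslant1$. This is the main obstacle, because it needs both a doubling bound $\Lambda(2t)\lesssim\Lambda(t)$ and the reverse inequality $\Lambda(t)\lesssim t\,a(t)$. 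The latter is the reverse of \eqref{eq:h-linear-upper} and does not follow from it.

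To get the doubling bound, I will use \eqref{eq:A3}$_{k=1}$: it gives $|(\ln a)'|\lesssim(\ln\Lambda)'$, so $a(s)/a(t)\leqslant(\Lambda(s)/\Lambda(t))^{C}$ for $s\geqslant t$. This alone does not bound $\Lambda(2t)/\Lambda(t)$, because $a$ could grow quickly. So I will avoid doubling: apply \eqref{eq:A2} with $T=t$ and keep only $s\in[0,t/2]$, where $T-s\geqslant t/2$. This gives $\int_0^{t/2}s a^2\,\mathrm ds\lesssim[\Lambda(t)]^2$.

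On $[t/2,t]$, I will try to compare $a(s)$ with $a(t)$ using \eqref{eq:A3}. The crude bound $\int_{t/2}^t s a^2\,\mathrm ds\leqslant t\max_{[t/2,t]}a\cdot\Lambda(t)$ works if $a(s)\lesssim a(t)$ there. That holds when $\Lambda(t)/\Lambda(s)\lesssim1$, i.e.\ $\Lambda(t/2)\gtrsim\Lambda(t)$. If this fails, $\Lambda$ grows by a large factor on $[t/2,t]$, which forces $a$ to be large near $t$, and comparability should follow via $h\lesssim1+t$. Combining the two halves, it remains to show $[\Lambda(t)]^2\lesssim t\,a(t)\Lambda(t)$, i.e.\ $\Lambda(t)\lesssim t\,a(t)$.

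For this reverse bound I will use \eqref{eq:A2} together with Cauchy--Schwarz. Write
\begin{align*}
\Lambda(t)-\Lambda(t/2)=\int_{t/2}^t a\,\mathrm ds\leqslant\Bigl(\int_{t/2}^t a^2\,\mathrm ds\Bigr)^{1/2}(t/2)^{1/2},
\end{align*}
and pair this with the lower bound for $a$ near $t$ coming from \eqref{eq:A3}, namely $a(s)\approx a(t)$ for $s\in[t-ch(t),t]$. This is where I expect the real difficulty to lie. If the direct route fails, I would fall back on a case split between $h(t)\gtrsim t$ and $h(t)\ll t$. In the first case $\Lambda\approx t a$ directly. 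In the second, $\Lambda$ grows fast on scale $h$, and the integral $\int_0^t s a^2\,\mathrm ds$ is dominated by its last stretch of length about $h$, which gives roughly $t\,a^2h=t\,a\Lambda$.
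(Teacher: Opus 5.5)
Your proof of \eqref{eq:h-linear-upper} is correct and matches the paper's. For \eqref{eq:first-a-moment}, the bound $\int_0^{t/2}s[a(s)]^2\,\mathrm ds\lesssim[\Lambda(t)]^2$ from \eqref{eq:A2} at $T=t$ is also fine, but the rest does not go through.

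First, a misreading. $\Lambda(t)\lesssim t\,a(t)$ is not the reverse of \eqref{eq:h-linear-upper}; for $t\geqslant1$ it \emph{is} \eqref{eq:h-linear-upper}, since $\Lambda/a\lesssim1+t\leqslant2t$. So your last paragraph (Cauchy--Schwarz, case split) addresses a non-issue.

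The genuine gap is the piece over $[t/2,t]$, where you need $\max_{[t/2,t]}a\lesssim a(t)$. You do not prove this, and it is false in general. Large growth of $\Lambda$ on $[t/2,t]$ forces $a$ to be large \emph{somewhere} there, not near $t$, and \eqref{eq:A3} only gives $a(s)/a(t)\leqslant(\Lambda(t)/\Lambda(s))^{C}$. To see this concretely, write $h=\Lambda/a$. Then $(\ln a)'=(1-h')/h$, and \eqref{eq:A3} with $m=1$ just says $|h'|\lesssim1$. Take $h(p)=1$ and $h'=L\in(1,2)$ on $[p,2p]$. There $a=a(p)\,h^{-(1-1/L)}$, so $a(p)/a(2p)=(1+Lp)^{1-1/L}$ is unbounded in $p$. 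Meanwhile the exponent $1-1/L<\frac12$ keeps \eqref{eq:A2} valid, if such phases alternate with phases where $h$ decreases back to $1$. For such speeds your crude bound exceeds $t\,a(t)\Lambda(t)$ by exactly this factor.

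Your fallback case split does not close the gap either. In the case $h(t)\gtrsim t$, which is the case at $t=2p$ above, it again relies on the reduction through $[t/2,t]$. In the case $h(t)\ll t$, the claim that $\int_0^t s a^2\,\mathrm ds$ is dominated by its last stretch of length about $h$ is asserted, not proved.

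The missing idea is to double in optical time rather than physical time; this is what rescues your first attempt with $T=2t$. Choose $T>t$ with $\Lambda(T)=2\Lambda(t)$. Use the local comparability you already noted forward in time: it gives $a(s)\approx a(t)$ on $[t,T]$, hence $T-t\approx\Lambda(t)/a(t)=h(t)$. Then \eqref{eq:h-linear-upper} gives $T\lesssim t$ for $t\geqslant1$. Since $T-s\geqslant T-t$ for all $s\in[0,t]$, \eqref{eq:A2} at time $T$ yields
\begin{align*}
h(t)\int_0^t s[a(s)]^2\,\mathrm ds\lesssim T[\Lambda(T)]^2\lesssim t[\Lambda(t)]^2.
\end{align*}
Dividing by $h(t)$ gives \eqref{eq:first-a-moment}. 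This treats all of $[0,t]$ at once and requires neither $\Lambda(2t)\lesssim\Lambda(t)$ nor any control of $a$ on $[t/2,t]$. It is the paper's proof.
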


\begin{proof}
Set $h=\frac{\Lambda}{a}$.  By \eqref{eq:A3} with $m=1$, we get
\begin{align*}
h'(t)=1-\frac{\Lambda(t)a'(t)}{[a(t)]^2} \ \ \mbox{and}\ \ |h'(t)|\lesssim1,
\end{align*}
which proves \eqref{eq:h-linear-upper}.

To prove \eqref{eq:first-a-moment}, choose $T>t$ so that $\Lambda(T)=2\Lambda(t)$.  For $s\in[t,T]$, \eqref{eq:A3} with $m=1$ gives
\begin{align*}
\left|\ln\frac{a(s)}{a(t)}\right| \lesssim \int_t^s\frac{a(q)}{\Lambda(q)}\,\mathrm dq \leqslant C\ln2.
\end{align*}
It follows that $a(s)\approx a(t)$ on $[t,T]$ and therefore
\begin{align}\label{eq:optical-doubling-length}
T-t\approx\frac{\Lambda(t)}{a(t)}=h(t).
\end{align}
By \eqref{eq:h-linear-upper}, $T\lesssim t$ for $t\geqslant1$.  Since $T-s\geqslant T-t$ for $0\leqslant s\leqslant t$, the condition \eqref{eq:A2} at time $T$ yields
\begin{align*}
(T-t)\int_0^t s[a(s)]^2\,\mathrm ds &\leqslant \int_0^T(T-s)s[a(s)]^2\,\mathrm ds\notag\\
&\lesssim T[\Lambda(T)]^2 \lesssim t[\Lambda(t)]^2.
\end{align*}
Dividing both sides by $T-t$ and using \eqref{eq:optical-doubling-length}, we obtain \eqref{eq:first-a-moment}.
\end{proof}

\begin{lemma}[Entrance data]\label{lem:entrance-data}
Assume that \eqref{eq:A2} and \eqref{eq:A3} with $m=1$ hold.  There is a sufficiently small $\varepsilon>0$ such that, whenever $r\Lambda(T)\leqslant\varepsilon$,
\begin{align}\label{eq:low-expansion-quantitative}
K(T,r) =T+O\bigl(T r^2[\Lambda(T)]^2\bigr)
\end{align}
and
\begin{align}\label{eq:Kr-low-bound}
\|K_r(\cdot,r)\|_{X_T} \lesssim r[\Lambda(T)]^2.
\end{align}
For $0<r<\varepsilon$ let $s_r$ be determined by
\begin{align}\label{eq:entrance-time-lower}
r\Lambda(s_r)=\varepsilon.
\end{align}
If $s_r\geqslant1$, then
\begin{align}\label{eq:entrance-W-size}
|W(s_r,r)| \approx B(r) \ \ \mbox{where}\ \
B(r):=\sqrt{a(s_r)}\,r(1+s_r),
\end{align}
where $W$ is defined by \eqref{eq:micro-energy}, and
\begin{align}\label{eq:entrance-W-derivative}
r\left|\frac{\mathrm d}{\mathrm dr}W(s_r,r)\right| \lesssim B(r).
\end{align}
The derivative in \eqref{eq:entrance-W-derivative} is taken along the curve $r\Lambda(s_r)=\varepsilon$.
\end{lemma}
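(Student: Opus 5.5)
We will work entirely with the Volterra formulation \eqref{eq:Volterra-K} on $X_T$, with $\varepsilon$ chosen so that $Cr^2[\Lambda(T)]^2\leqslant C\varepsilon^2\leqslant\frac12$ in \eqref{eq:Phi-bound}. Then $I+\Phi_r$ is invertible on $X_T$ by a Neumann series, with $\|(I+\Phi_r)^{-1}\|\leqslant2$. Since $\|e\|_{X_T}=1$, we get $\|K\|_{X_T}\leqslant2$. Writing $K-e=-\Phi_rK$ gives
\begin{align*}
\|K-e\|_{X_T}\leqslant Cr^2[\Lambda(T)]^2\|K\|_{X_T}.
\end{align*}
Evaluating at $t=T$ yields \eqref{eq:low-expansion-quantitative}.

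For \eqref{eq:Kr-low-bound}, we differentiate \eqref{eq:Volterra-K} in $r$. The differentiation is justified by smooth dependence on the parameter $r$ of the Neumann series, which converges uniformly in the $X_T$ norm for $r$ in the admissible range. This gives
\begin{align*}
(I+\Phi_r)K_r=-\frac{2}{r}\Phi_rK,
\end{align*}
so
\begin{align*}
\|K_r\|_{X_T}\leqslant\frac{4}{r}\,Cr^2[\Lambda(T)]^2\|K\|_{X_T}\lesssim r[\Lambda(T)]^2.
\end{align*}

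Next we establish the entrance size \eqref{eq:entrance-W-size}, applying the above with $T=s_r$, so that $r\Lambda(s_r)=\varepsilon$.

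\emph{First component.} By \eqref{eq:low-expansion-quantitative} and smallness of $\varepsilon$, $K(s_r,r)\approx s_r$. Since $s_r\geqslant1$, this gives $s_r\approx1+s_r$, hence the first component satisfies $\sqrt{a(s_r)}\,rK(s_r,r)\approx B(r)$. This already gives the lower bound $|W(s_r,r)|\gtrsim B(r)$.

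\emph{Second component.} For the upper bound we must show $[a(s_r)]^{-1/2}|K_t(s_r,r)|\lesssim B(r)$. Differentiating the Volterra equation gives
\begin{align*}
K_t(t,r)=1-r^2\int_0^t[a(s)]^2K(s,r)\,\mathrm ds.
\end{align*}
Using $|K(s,r)|\leqslant2s$ and \eqref{eq:first-a-moment}, we obtain
\begin{align*}
|K_t(s_r,r)|\lesssim1+r^2s_r\,a(s_r)\Lambda(s_r)=1+\varepsilon r\,s_r\,a(s_r).
\end{align*}
Dividing by $\sqrt{a(s_r)}$, the second term becomes $\varepsilon r s_r\sqrt{a(s_r)}\lesssim B(r)$. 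For the constant term we need $[a(s_r)]^{-1/2}\lesssim\sqrt{a(s_r)}\,r(1+s_r)$. This is equivalent to $1\lesssim r\,a(s_r)(1+s_r)=\varepsilon\,a(s_r)(1+s_r)/\Lambda(s_r)$, which follows from \eqref{eq:h-linear-upper}.

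Finally, \eqref{eq:entrance-W-derivative} is the step I expect to be the main obstacle, since $s_r$ itself moves with $r$. Along the curve we have
\begin{align*}
\frac{\mathrm ds_r}{\mathrm dr}=-\frac{\Lambda(s_r)}{r\,a(s_r)},\qquad\text{so}\qquad r\left|\frac{\mathrm ds_r}{\mathrm dr}\right|=h(s_r)\lesssim1+s_r.
\end{align*}
By the chain rule,
\begin{align*}
r\frac{\mathrm d}{\mathrm dr}W(s_r,r)=rW_r(s_r,r)-h(s_r)W_t(s_r,r),
\end{align*}
and we bound the two terms separately.

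\emph{The term $rW_r$.} Its first component is $\sqrt{a}\,r(K+rK_r)$. By \eqref{eq:Kr-low-bound}, $r|K_r(s_r,r)|\lesssim r^2\Lambda^2s_r=\varepsilon^2s_r$, so this component is $O(B)$. For the second component, differentiating the formula for $K_t$ in $r$ gives
\begin{align*}
K_{tr}=-2r\int_0^t a^2K\,\mathrm ds-r^2\int_0^t a^2K_r\,\mathrm ds.
\end{align*}
By the same moment bound \eqref{eq:first-a-moment}, $r|K_{tr}|\lesssim\varepsilon\,r\,s_r\,a(s_r)$, which after division by $\sqrt{a}$ is again $O(B)$.

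\emph{The term $h\,W_t$.} Compute
\begin{align*}
W_t=\begin{pmatrix}\frac{a'}{2a}\sqrt a\,rK+\sqrt a\,rK_t\\[1mm]-\frac{a'}{2a}a^{-1/2}K_t-\sqrt a\,r^2aK\end{pmatrix}.
\end{align*}
By \eqref{eq:A3} with $m=1$ we have $|a'|/a\lesssim h^{-1}$, so the terms with $a'/a$, after multiplication by $h$, are controlled by $|W|\lesssim B$. The remaining terms are:
\begin{itemize}
\item $h\sqrt a\,r|K_t|\lesssim h\,r\sqrt a\bigl(1+\varepsilon r s_r a\bigr)$. Here $h\lesssim1+s_r$ handles the part coming from the constant $1$. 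For the other part, $h\,r\,a=\Lambda r=\varepsilon$, so it is at most $\varepsilon\sqrt a\,r s_r\lesssim B$.
\item $h\sqrt a\,r^2aK=\sqrt a\,r\,(r\Lambda)K\lesssim\varepsilon B$.
\end{itemize}
This closes the estimate. The delicate point throughout is to convert every factor $h(s_r)$ into $1+s_r$ via \eqref{eq:h-linear-upper}, and to use $r\Lambda(s_r)=\varepsilon$ to cancel one factor of $h\,a\,r$.
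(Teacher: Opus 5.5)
Your proposal is correct and follows essentially the same route as the paper. The shared steps are the Neumann series for $(I+\Phi_r)^{-1}$ on $X_T$, the $r$-differentiated Volterra equation $(I+\Phi_r)K_r=-\frac{2}{r}\Phi_rK$, the bounds for $K_t$ and $K_{tr}$ at $s_r$ via \eqref{eq:first-a-moment}, and the chain rule along $r\Lambda(s_r)=\varepsilon$ using $rs_r'=-\Lambda(s_r)/a(s_r)$ together with \eqref{eq:h-linear-upper}. Your splitting $r\frac{\mathrm d}{\mathrm dr}W=rW_r-h\,W_t$ is just a compact repackaging of the paper's componentwise expansion. The only bookkeeping point is that the lower bound $K(s_r,r)\approx s_r$ needs $\varepsilon$ somewhat smaller than your initial choice $C\varepsilon^2\leqslant\frac12$, which your appeal to ``smallness of $\varepsilon$'' already allows.
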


\begin{proof}
Let us choose $\varepsilon$ sufficiently small so that the operator norm of $\Phi_r$ on $X_T$ is less than $\frac12$.  By applying the Neumann series to \eqref{eq:Volterra-K}, we obtain
\begin{align*}
\|K-e\|_{X_T} \lesssim r^2[\Lambda(T)]^2,
\end{align*}
which proves \eqref{eq:low-expansion-quantitative}.  Differentiating \eqref{eq:Volterra-K} with respect to $r$ gives
\begin{align*}
(I+\Phi_r)K_r =-2r\int_0^t(t-s)[a(s)]^2K(s,r)\,\mathrm ds.
\end{align*}
The same operator estimate proves \eqref{eq:Kr-low-bound}.

In the remainder of the proof, all functions are evaluated at $(s_r,r)$ unless another argument is displayed.  By using \eqref{eq:low-expansion-quantitative} and \eqref{eq:Kr-low-bound}, we obtain
\begin{align}\label{eq:K-at-entrance}
K=s_r+O(\varepsilon^2s_r) \ \ \mbox{and}\ \
r|K_r|\lesssim s_r.
\end{align}
Differentiating the Volterra equation in time and using Lemma~\ref{lem:elementary-symbol-consequences}, we obtain
\begin{align}\label{eq:Kt-at-entrance}
|K_t-1| &\leqslant r^2\int_0^{s_r}[a(s)]^2|K(s,r)|\,\mathrm ds\notag\\
&\lesssim \varepsilon^2\frac{s_r a(s_r)}{\Lambda(s_r)}.
\end{align}
One more frequency derivative gives
\begin{align}\label{eq:Ktr-at-entrance}
r|K_{tr}| \lesssim \varepsilon^2\left(1+\frac{s_r a(s_r)}{\Lambda(s_r)}\right).
\end{align}
To derive \eqref{eq:Ktr-at-entrance}, we use the identity
\begin{align*}
K_{tr}(t,r) &=-2r\int_0^t[a(s)]^2K(s,r)\,\mathrm ds -r^2\int_0^t[a(s)]^2K_r(s,r)\,\mathrm ds
\end{align*}
together with \eqref{eq:Kr-low-bound}, $|K(s,r)|\lesssim s$, and \eqref{eq:first-a-moment}.  Since
\begin{align}\label{eq:sr-prime}
rs_r'=-\frac{\Lambda(s_r)}{a(s_r)}
\end{align}
and \eqref{eq:h-linear-upper} holds, the second component of $W$ is bounded by a constant multiple of $B(r)$.  The first component and \eqref{eq:K-at-entrance} give the reverse inequality.  This proves \eqref{eq:entrance-W-size}.

It remains to differentiate the two components of $W$.  Using \eqref{eq:sr-prime}, we have
\begin{align*}
r\frac{\mathrm d}{\mathrm dr} \bigl(\sqrt{a(s_r)}\,rK(s_r,r)\bigr) =\sqrt{a(s_r)}\,\biggl\{ rK+r^2K_r+r^2s_r'K_t +\frac{a'(s_r)}{2a(s_r)}(rs_r')rK \biggr\}
\end{align*}
and
\begin{align*}
r\frac{\mathrm d}{\mathrm dr} \bigl([a(s_r)]^{-\frac12}K_t(s_r,r)\bigr) =[a(s_r)]^{-\frac12}\biggl\{ rK_{tr}+rs_r'K_{tt} -\frac{a'(s_r)}{2a(s_r)}(rs_r')K_t \biggr\}.
\end{align*}
Now use \eqref{eq:K-at-entrance}, \eqref{eq:Kt-at-entrance}, and \eqref{eq:Ktr-at-entrance}, $$K_{tt}=-[a(s_r)]^2r^2K,$$ and \eqref{eq:A3} with $m=1$, and \eqref{eq:h-linear-upper}.  For brevity, let us set $s=s_r$, $a=a(s_r)$, and $\Lambda=\Lambda(s_r)$. Then
\begin{align}\label{eq:entrance-auxiliary-bounds}
\left|\frac{a'(s)}{a}rs_r'\right|\lesssim1 \ \ \mbox{and}\ \ 
|r^2s_r'|=\frac{\varepsilon}{a} \ \ \mbox{and}\ \ 
B(r)\gtrsim_{\varepsilon}a^{-\frac12}.
\end{align}
The last inequality follows from $\Lambda/a\lesssim1+s$ and $r\Lambda=\varepsilon$.  The terms in the first component satisfy
\begin{align*}
\sqrt a\biggl(|rK|+|r^2K_r| \mathbin+\left|\frac{a'(s)}{a}(rs_r')rK\right|\,\biggr) &\lesssim B(r),\notag\\
\sqrt a\,|r^2s_r'K_t| &\lesssim \varepsilon a^{-\frac12} \left(1+\varepsilon^2\frac{sa}{\Lambda}\right) \lesssim B(r).
\end{align*}
For the second component, applying \eqref{eq:Ktr-at-entrance} and \eqref{eq:entrance-auxiliary-bounds}, we derive
\begin{align*}
a^{-\frac12}\biggl(|rK_{tr}| \mathbin+\left|\frac{a'(s)}{a}(rs_r')K_t\right|\,\biggr) &\lesssim a^{-\frac12} \left(1+\varepsilon^2\frac{sa}{\Lambda}\right) \lesssim B(r),\notag\\
a^{-\frac12}|rs_r'K_{tt}| &=\varepsilon\sqrt a\,r|K| \lesssim B(r).
\end{align*}
This proves \eqref{eq:entrance-W-derivative}.
\end{proof}

\subsection{Frequency-stable propagation}

To derive the lower bound estimate, we need bounds for the hyperbolic propagator, its inverse, and its first derivative with respect to the frequency.  The derivative estimate requires the third symbol estimate. We then apply these bounds to extend the estimates in Lemma~\ref{lem:entrance-data} to the hyperbolic zone.

\begin{lemma}[Transfer matrix]\label{lem:transfer-matrix}
Assume that \eqref{eq:A3} holds with $m=3$.  Fix $\varepsilon>0$.  For $0<r\leqslant\varepsilon$, let $\ml T(\tau,r)$ be the fundamental matrix of
\begin{align}\label{eq:Y-system-lower}
Y_\tau=b(\tau,r)M(\tau)Y \ \ \mbox{with}\ \ \ml T(\varepsilon,r)=I,
\end{align}
where $b$ and $M$ are defined in \eqref{eq:def-b} and in the proof of Lemma~\ref{lem:hyperbolic-propagation}.  Then one has
\begin{align}\label{eq:transfer-bounds}
\|\ml T(\tau,r)\| +\|[\ml T(\tau,r)]^{-1}\| +\|r\partial_r\ml T(\tau,r)\| \lesssim1 \ \ \mbox{for}\ \ \tau\geqslant\varepsilon.
\end{align}
Here $\partial_r$ is taken with $\tau$ fixed.
\end{lemma}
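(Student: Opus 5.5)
The plan is to reuse the normal-form reduction from the proof of Lemma~\ref{lem:hyperbolic-propagation}. The first step handles the inverse. Since $\operatorname{tr}\bigl(b(\tau,r)M(\tau)\bigr)=b\operatorname{tr}D=0$, Liouville's formula gives $\det\ml T(\tau,r)=1$. For a $2\times2$ matrix of determinant one we have $\ml T^{-1}=J^{\top}\ml T^{\top}J$, so $\|\ml T^{-1}\|=\|\ml T\|$. It therefore suffices to bound $\ml T$ and $r\partial_r\ml T$.

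\textbf{Bound on $\ml T$.} The bound on $\ml T$ is essentially Lemma~\ref{lem:hyperbolic-propagation}, since $W=R\,Y$ and $R$ is unitary. I will redo it in a form that tracks the $r$-dependence. Fix $\tau_0$ as in that proof, so that $\|b(\tau,r)N(\tau)\|\leqslant\frac12$ for $\tau\geqslant\tau_0$, uniformly in $r$. This is possible by $|b|\lesssim\tau^{-1}$ from \eqref{eq:b-upper-symbols}. The interval $[\varepsilon,\max\{\varepsilon,\tau_0\}]$ has bounded length, and there $|b|\lesssim\varepsilon^{-1}$, so Gronwall bounds $\ml T$ uniformly on it. For $\tau\geqslant\tau_0$ I write
\begin{align*}
\ml T(\tau,r)=\bigl(I+b(\tau,r)N(\tau)\bigr)\ml Z(\tau,r)\bigl(I+b(\tau_0,r)N(\tau_0)\bigr)^{-1}\ml T(\tau_0,r).
\end{align*}
Here $\ml Z$ is the fundamental matrix of $Z_\tau=A(\tau,r)Z$ with $\ml Z(\tau_0,r)=I$, and
\begin{align*}
A:=(I+bN)^{-1}\bigl(-\partial_\tau b\,N+b^2MN\bigr),\qquad \|A\|\lesssim\tau^{-2}.
\end{align*}
Gronwall then gives $\|\ml Z\|\lesssim1$.

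\textbf{Bound on $r\partial_r\ml T$.} The key computation is the effect of $r\partial_r$ at fixed $\tau$ on the symbols. Since $t=\Lambda^{-1}(\tau/r)$, we get $r\partial_r t=-\Lambda(t)/a(t)=-h(t)$. Recall $b=\mu(t)/(2\tau)$ with $\mu=\Lambda a'/a^2$. By \eqref{eq:A3} with $k\leqslant2$ one has $|\mu'|\lesssim a/\Lambda$, hence $|r\partial_r b|=|h\mu'|/(2\tau)\lesssim\tau^{-1}$. This decay is \emph{not} integrable, which is why the raw variation equation for $\ml T$ is useless at large $\tau$. Instead I will differentiate the normal form.

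Similarly, $\partial_\tau b$ is $\tau^{-2}$ times a combination $\nu(t)$ of $\Lambda^2a''/a^3$ and $\Lambda^2(a')^2/a^4$. Then $r\partial_r\partial_\tau b=-h\nu'(t)/\tau^2$, and $|h\nu'|\lesssim1$ follows from \eqref{eq:A3} with $k\leqslant3$. This is exactly where the third derivative of $a$ enters. Consequently $\|r\partial_rA\|\lesssim\tau^{-2}$, and also $\|r\partial_r(I+bN)^{\pm1}\|\lesssim1$.

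Differentiating $\ml Z_\tau=A\ml Z$ gives $(r\partial_r\ml Z)_\tau=A\,(r\partial_r\ml Z)+(r\partial_rA)\ml Z$ with zero initial value. By Duhamel's formula and Gronwall, $\|r\partial_r\ml Z\|\lesssim\int_{\tau_0}^\infty\sigma^{-2}\,\mathrm d\sigma\lesssim1$. On the compact piece $[\varepsilon,\tau_0]$, the variation equation for $r\partial_r\ml T$ has forcing $(r\partial_rb)M\ml T=O(\varepsilon^{-1})$ over a bounded length, so Gronwall gives $\|r\partial_r\ml T(\tau_0,r)\|\lesssim1$. Applying the product rule to the factorization above then yields \eqref{eq:transfer-bounds}.

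\textbf{Main obstacle.} The delicate step is verifying $\|r\partial_r\partial_\tau b\|\lesssim\tau^{-2}$ uniformly in $r\leqslant\varepsilon$. This requires expressing every $t$-derivative through $h=\Lambda/a$ and checking that each factor $h\cdot(\text{derivative})$ is controlled by \eqref{eq:A3} with $m=3$, together with $h'=1-\mu=O(1)$. I would carry out this bookkeeping explicitly for $\nu'$.
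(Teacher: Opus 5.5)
Your proposal is correct and follows essentially the paper's proof: the same symbol bounds $|b|+|r\partial_r b|\lesssim\tau^{-1}$ and $|\partial_\tau b|+|r\partial_r\partial_\tau b|\lesssim\tau^{-2}$ (the third derivative of $a$ enters only in the last one), Gronwall on the compact piece $[\varepsilon,\tau_0]$, the normal form $Y=(I+bN)Z$ with a variational equation for the $r\partial_r$-derivative of the $Z$-propagator, and the same factorization of $\ml T$. The only deviation is that you use $\operatorname{tr}(bM)=0$ and Liouville's formula to get $\det\ml T=1$, and hence $\|\ml T^{-1}\|=\|\ml T\|$; this is a small shortcut replacing the paper's direct Gronwall bounds on the inverse of each factor.
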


\begin{proof}
Let us introduce
\begin{align*}
\eta(t):=\frac{a(t)}{\Lambda(t)} \ \ \mbox{and}\ \
\mu(t):=\frac{\Lambda(t)a'(t)}{[a(t)]^2}.
\end{align*}
The first three symbol estimates imply
\begin{align}\label{eq:eta-mu-symbols}
|\eta'|\lesssim\eta^2 \ \ \mbox{and}\ \
|\mu|\lesssim1 \ \ \mbox{and}\ \
|\mu'|\lesssim\eta \ \ \mbox{and}\ \
|\mu''|\lesssim\eta^2.
\end{align}
From $\tau=r\Lambda(t)$, we obtain
\begin{align*}
\partial_\tau t=\frac{1}{\tau\eta(t)} \ \ \mbox{and}\ \
r\partial_rt=-\frac{1}{\eta(t)}.
\end{align*}
Since $b=\frac{\mu}{2\tau}$, \eqref{eq:eta-mu-symbols} gives
\begin{align}\label{eq:b-full-symbols}
|b|+|r\partial_rb|\lesssim\tau^{-1} \ \ \mbox{and}\ \
|\partial_\tau b|+|r\partial_r\partial_\tau b| \lesssim\tau^{-2}.
\end{align}
Indeed, differentiating $b$ with respect to $r$ and $\tau$, we obtain
\begin{align*}
r\partial_rb =-\frac{1}{2\tau}\frac{\mu'}{\eta} \ \ \mbox{and}\ \
\partial_\tau b =\frac{1}{2\tau^2}\left(\frac{\mu'}{\eta}-\mu\right).
\end{align*}
Differentiating the second identity with respect to $r$ uses
\begin{align*}
\left(\frac{\mu'}{\eta}\right)' =\frac{\mu''}{\eta}-\frac{\mu'\eta'}{\eta^2} =O(\eta),
\end{align*}
which is exactly where the third symbol estimate is needed.

Fix $\tau_0>\varepsilon$ sufficiently large. On $[\varepsilon,\tau_0]$, let $\ml S=r\partial_r\ml T$.  Differentiating \eqref{eq:Y-system-lower} at fixed $\tau$ gives
\begin{align*}
\ml S_\tau =bM\ml S+(r\partial_rb)M\ml T \ \ \mbox{with}\ \ \ml S(\varepsilon,r)=0.
\end{align*}
Applying \eqref{eq:b-full-symbols} and Gronwall's inequality, we obtain uniform bounds for $\ml T$, $\ml T^{-1}$, and $\ml S$ on $[\varepsilon,\tau_0]$.

Let $N$ be the bounded primitive of $M$ used in the proof of Lemma~\ref{lem:hyperbolic-propagation}.  We write
\begin{align*}
Y=(I+bN)Z \ \ \mbox{for}\ \ \tau\geqslant\tau_0.
\end{align*}
Then
\begin{align}\label{eq:Z-lower-system}
Z_\tau=\ml A(\tau,r)Z \ \ \mbox{where}\ \
\ml A =(I+bN)^{-1}\bigl(-b_\tau N+b^2MN\bigr).
\end{align}
It follows from \eqref{eq:b-full-symbols} that
\begin{align*}
\|\ml A(\tau,r)\| +\|r\partial_r\ml A(\tau,r)\| \lesssim\tau^{-2}.
\end{align*}
Let $\ml U(\tau,r)$ be the fundamental matrix of \eqref{eq:Z-lower-system} with $\ml U(\tau_0,r)=I$.  Since $\|\ml A(\tau,r)\|\lesssim\tau^{-2}$,
\begin{align*}
\|\ml U(\tau,r)\|+\|[\ml U(\tau,r)]^{-1}\| \lesssim1 \ \ \mbox{for}\ \ \tau\geqslant\tau_0.
\end{align*}
Moreover, $\ml S=r\partial_r\ml U$ solves
\begin{align*}
\ml S_\tau =\ml A\ml S+(r\partial_r\ml A)\ml U \ \ \mbox{with}\ \ \ml S(\tau_0,r)=0.
\end{align*}
The integrability of $r\partial_r\ml A$ and Gronwall's inequality yield $\|\ml S(\tau,r)\|\lesssim1$.  The matrices $I+bN$ and $(I+bN)^{-1}$, as well as their logarithmic frequency derivatives, are uniformly bounded for $\tau\geqslant\tau_0$. The fundamental matrices on the two intervals satisfy
\begin{align*}
\ml T(\tau,r) &=\bigl(I+b(\tau,r)N(\tau)\bigr)\ml U(\tau,r) \bigl(I+b(\tau_0,r)N(\tau_0)\bigr)^{-1}\ml T(\tau_0,r).
\end{align*}
Each factor, its inverse, and its logarithmic frequency derivative are bounded.  Hence \eqref{eq:transfer-bounds} holds also on $[\tau_0,+\infty)$, which proves the lemma.
\end{proof}

For $r<\varepsilon$ and $r\Lambda(t)\geqslant\varepsilon$, define
\begin{align}\label{eq:def-V-amplitude}
V(t,r) :=\ml T\bigl(r\Lambda(t),r\bigr)R(-\varepsilon)W(s_r,r).
\end{align}
Here $s_r$ is determined by \eqref{eq:entrance-time-lower}. The representation $W=RY$ gives the exact identity
\begin{align}\label{eq:W-rotated-amplitude}
W(t,r)=R\bigl(r\Lambda(t)\bigr)V(t,r).
\end{align}

\begin{lemma}[Renormalized amplitude]\label{lem:renormalized-amplitude}
Under the assumptions of Theorem~\ref{thm:sharp}, there exists $\rho\in(0,\varepsilon)$ such that
\begin{align}\label{eq:V-size-derivative}
|V(t,r)|\approx B(r) \ \ \mbox{and}\ \
r|\partial_rV(t,r)|\lesssim|V(t,r)|
\end{align}
whenever $\frac{\varepsilon}{\Lambda(t)}\leqslant r\leqslant\rho$. The constants are independent of $t$ and $r$.
\end{lemma}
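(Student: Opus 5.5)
The plan is to read off both assertions in \eqref{eq:V-size-derivative} from the definition \eqref{eq:def-V-amplitude}. The transfer matrix $\ml T$ is controlled by Lemma~\ref{lem:transfer-matrix}, the rotation $R(-\varepsilon)$ is orthogonal, and the entrance vector $W(s_r,r)$ is controlled by Lemma~\ref{lem:entrance-data}. First I fix $\rho\in(0,\varepsilon)$ so that Lemma~\ref{lem:entrance-data} applies, namely so that $s_r\geqslant1$ for every $0<r\leqslant\rho$. Since $\Lambda(s_r)=\varepsilon/r\geqslant\varepsilon/\rho$ and $\Lambda$ is increasing, it suffices to take $\rho\leqslant\varepsilon/\Lambda(1)$. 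The condition $\frac{\varepsilon}{\Lambda(t)}\leqslant r$ means $\tau=r\Lambda(t)\geqslant\varepsilon$, so $t\geqslant s_r$ and the bounds \eqref{eq:transfer-bounds} are available at $(\tau,r)$.

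\emph{Size.} Since $\|\ml T(\tau,r)\|+\|[\ml T(\tau,r)]^{-1}\|\lesssim1$ and $R(-\varepsilon)$ is an isometry, we have
\begin{align*}
|V(t,r)|\approx|W(s_r,r)|\approx B(r),
\end{align*}
where the last step is \eqref{eq:entrance-W-size}. The constants are independent of $t$ and $r$ because those in Lemma~\ref{lem:transfer-matrix} and Lemma~\ref{lem:entrance-data} are.

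\emph{Frequency derivative.} Here $t$ is fixed, so $\tau=r\Lambda(t)$ varies with $r$ and $r\partial_r\tau=\tau$. The chain rule gives
\begin{align*}
r\partial_rV(t,r)=\Bigl[\tau(\partial_\tau\ml T)(\tau,r)+(r\partial_r\ml T)(\tau,r)\Bigr]R(-\varepsilon)W(s_r,r)+\ml T(\tau,r)R(-\varepsilon)\,r\frac{\mathrm d}{\mathrm dr}W(s_r,r).
\end{align*}
I bound the three terms as follows.
\begin{itemize}
\item By \eqref{eq:Y-system-lower}, $\partial_\tau\ml T=bM\ml T$. Since $\|M\|=1$, $|b|\lesssim\tau^{-1}$ by \eqref{eq:b-full-symbols}, and $\|\ml T\|\lesssim1$, we get $\|\tau\partial_\tau\ml T\|\lesssim1$.
\item The term $\|r\partial_r\ml T\|\lesssim1$ is part of \eqref{eq:transfer-bounds}.
\item The last term is bounded by $B(r)$ by \eqref{eq:entrance-W-derivative}, which is already taken along the curve $r\Lambda(s_r)=\varepsilon$, exactly as it appears here.
\end{itemize}
Combining these with $|W(s_r,r)|\approx B(r)$ gives $r|\partial_rV|\lesssim B(r)\approx|V|$.

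The step I expect to need the most care is the chain-rule term from the $r$-dependence of $\tau$ at fixed $t$. Its boundedness depends on the sharp decay $|b|\lesssim\tau^{-1}$ in \eqref{eq:b-full-symbols}; a weaker bound would not give a $\tau$-uniform estimate. A secondary point is justifying the differentiability of $r\mapsto V(t,r)$. This follows from the $\ml C^1$ dependence of $s_r$ on $r$ (by \eqref{eq:sr-prime}, since $a>0$), from the smooth dependence of $K$ on the parameter $r$ in the Volterra equation \eqref{eq:Volterra-K}, and from the differentiability of $\ml T$ in $r$, which is established within the proof of Lemma~\ref{lem:transfer-matrix}.
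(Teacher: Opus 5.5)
Your proposal is correct and follows essentially the same route as the paper: the size estimate comes from the two-sided bounds on $\ml T$ in \eqref{eq:transfer-bounds} combined with \eqref{eq:entrance-W-size}, and the derivative estimate comes from the chain-rule identity \eqref{eq:V-chain-rule}, where $\tau\partial_\tau\ml T=\tau bM\ml T$ is bounded via \eqref{eq:b-full-symbols}. Your explicit choice $\rho\leqslant\varepsilon/\Lambda(1)$ and your remark on differentiability in $r$ are small additions that the paper leaves implicit.
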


\begin{proof}
Choosing $\rho>0$ sufficiently small, we have $s_r\geqslant1$ for $r\leqslant\rho$. Applying Lemmas~\ref{lem:entrance-data} and \ref{lem:transfer-matrix}, in particular the bound for $\ml T^{-1}$, we obtain the first estimate in \eqref{eq:V-size-derivative}.

Let us put
\begin{align*}
\tau=r\Lambda(t) \ \ \mbox{and}\ \
E(r)=R(-\varepsilon)W(s_r,r).
\end{align*}
Differentiating \eqref{eq:def-V-amplitude} at fixed $t$ gives
\begin{align}\label{eq:V-chain-rule}
r\partial_rV(t,r) =\bigl(\tau\partial_\tau\ml T(\tau,r) +r\partial_r\ml T(\tau,r)\bigr)E(r) +\ml T(\tau,r)rE'(r),
\end{align}
where $\partial_r\ml T(\tau,r)$ is evaluated with $\tau$ fixed.  From \eqref{eq:Y-system-lower},
\begin{align*}
\tau\partial_\tau\ml T(\tau,r) =\tau b(\tau,r)M(\tau)\ml T(\tau,r),
\end{align*}
which is uniformly bounded by \eqref{eq:b-full-symbols} and \eqref{eq:transfer-bounds}.  Applying \eqref{eq:transfer-bounds} and \eqref{eq:entrance-W-derivative} to \eqref{eq:V-chain-rule}, we obtain
\begin{align*}
r|\partial_rV(t,r)|\lesssim B(r)\approx|V(t,r)|,
\end{align*}
and the proof is complete.
\end{proof}

\subsection{Oscillatory observability}

The first component in \eqref{eq:W-rotated-amplitude} may vanish at individual frequencies.  Nevertheless, under the derivative bound in \eqref{eq:V-size-derivative}, we derive a lower bound estimate for its weighted $L^2$ norm in terms of that of $V$ on every sufficiently long frequency interval.

\begin{lemma}\label{lem:oscillatory-observability}
Let $V=(V_1,V_2)\in\ml C^1\bigl([R,\rho],\mb R^2\bigr)$ satisfy
\begin{align*}
|V(r)|>0 \ \ \mbox{and}\ \ r|V'(r)|\leqslant C_0|V(r)|.
\end{align*}
For every $n\geqslant1$, there exists $N_0=N_0(n,C_0)$ such that, if $L>0$, $N\geqslant N_0$, $R=\frac NL$, and $2R<\rho$, then
\begin{align}\label{eq:oscillatory-observability}
\int_R^\rho |V_1(r)\cos(Lr)+V_2(r)\sin(Lr)|^2r^{n-3}\,\mathrm dr\gtrsim \int_R^\rho|V(r)|^2r^{n-3}\,\mathrm dr.
\end{align}
The implicit constant depends only on $n$ and $C_0$.
\end{lemma}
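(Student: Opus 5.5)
We will write $V$ in polar form, $V(r)=|V(r)|(\cos\theta(r),\sin\theta(r))$, which is possible since $|V|>0$ and $V\in\ml C^1$. The integrand then becomes
\begin{align*}
|V(r)|^2\cos^2\bigl(Lr-\theta(r)\bigr).
\end{align*}
From $r|V'|\leqslant C_0|V|$ we get $r|\theta'(r)|\leqslant C_0$ and $r\,|(\ln|V|)'(r)|\leqslant C_0$. Hence $|V|$ and the weight $r^{n-3}$ vary by at most a bounded factor on dyadic-type intervals $[r_0,r_0(1+\delta)]$. On such an interval the phase $\phi(r)=Lr-\theta(r)$ has derivative $L-\theta'(r)\geqslant L-C_0/r\geqslant L(1-C_0/N)\geqslant L/2$, because $r\geqslant R=N/L$ and $N\geqslant N_0\geqslant 2C_0$. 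So $\phi$ is strictly increasing with derivative comparable to $L$, and it sweeps through many periods on each interval of length $\gtrsim 1/L$.

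The key local step is as follows. We partition $[R,\rho]$ into intervals $I_j$ on which $\phi$ increases by exactly $\pi$, with a possibly shorter last piece. Each such interval has length between $\pi/(2L)$ and $\pi/(L-C_0/R)\lesssim 1/L$. On $I_j$ we have $r\geqslant R=N/L$, so the relative length satisfies $|I_j|/r\lesssim 1/N$. By the logarithmic derivative bounds, $|V|^2r^{n-3}$ is then constant on $I_j$ up to a factor $e^{C(n,C_0)/N}\leqslant 2$. Changing variables to $\phi$ gives
\begin{align*}
\int_{I_j}\cos^2\phi\,\mathrm dr=\int \cos^2\phi\,\frac{\mathrm d\phi}{\phi'}\geqslant \frac{1}{2}\cdot\frac{\pi}{2}\cdot\frac{1}{\max\phi'}\gtrsim |I_j|,
\end{align*}
using $\phi'\approx L$ on $I_j$. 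Therefore
\begin{align*}
\int_{I_j}|V|^2\cos^2(\phi)\, r^{n-3}\,\mathrm dr\gtrsim \int_{I_j}|V|^2r^{n-3}\,\mathrm dr.
\end{align*}
Summing over the full intervals gives \eqref{eq:oscillatory-observability}, apart from the final incomplete piece.

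The main obstacle is the boundary piece near $\rho$, and more generally making the constant depend only on $n,C_0$. The final incomplete interval has length $\lesssim 1/L\leqslant R/N$, so its contribution to the right-hand side is at most a fraction $\lesssim 1/N$ relative to the contribution of the neighboring full interval. Here we use $2R<\rho$, which guarantees that at least one full interval exists near $\rho$, since $\rho-R>\rho/2\gg 1/L$. The comparability of weights across adjacent intervals is again justified by the derivative bound. This piece can therefore be absorbed once $N_0$ is taken large. Finally, $N_0$ is fixed so that $C_0/N_0\leqslant 1/2$ and so that the distortion factor $e^{C/N_0}$ is at most $2$; this makes all constants depend only on $n$ and $C_0$.
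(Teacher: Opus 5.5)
Your argument is correct, but it takes a different route from the paper.

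\textbf{The paper's route.} The paper never introduces a phase. It expands the square as $\frac12p(r)+A(r)\cos(2Lr)+D(r)\sin(2Lr)$ with $p=|V|^2r^{n-3}$, $|A|+|D|\lesssim p$ and $|A'|+|D'|\lesssim p/r$. It integrates the oscillatory part by parts once. The resulting error $\frac1L\bigl(p(R)+p(\rho)+\int_R^\rho p/r\,\mathrm dr\bigr)$ is then bounded by $\frac CN\int_R^\rho p\,\mathrm dr$, using $LR=N$ and the comparability of $p$ on $[R,2R]$ and $[\rho/2,\rho]$. This comparability step is where $2R<\rho$ enters.

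\textbf{Your route.} You lift $V$ to polar form and use $N_0\geqslant 2C_0$ to make $\phi=Lr-\theta$ monotone with $\phi'\approx L$. You then prove the bound separately on each half-period of $\phi$, where $p$ is essentially constant because the relative length is $O(1/N)$.

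\textbf{Comparison.} The paper's argument is shorter and gives the asymptotically natural constant $\frac12-\frac CN$, the mean of $\cos^2$. It also needs no lift of the angle and no monotonicity of a phase. Yours is more elementary, avoids boundary terms, and yields a localized statement: the estimate holds on every full half-period. The cost is a worse constant (about $\frac16$ per half-period, then an extra factor from the last piece), the $C^1$ angle lift, and separate treatment of the incomplete final piece.

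\textbf{A misstatement about the final piece.} Your justification there is slightly off. The incomplete piece has length $<\pi/(L-C_0/R)\leqslant 2\pi/L$, while the neighboring full half-period has length $\geqslant 2\pi/(3L)$. So its contribution to the right-hand side is comparable to the neighbor's contribution, not a fraction $\lesssim 1/N$ of it. This is harmless. Since $p$ varies by at most a factor $2$ across the union of the two pieces, the last piece is bounded by a fixed multiple of the neighbor's right-hand contribution. That contribution is already controlled by the neighbor's left-hand contribution, so no absorption is needed. Alternatively, comparing with $\int_{\rho/2}^\rho p\,\mathrm dr$, as the paper does, does give an $O(1/N)$ bound relative to the whole integral $\int_R^\rho p\,\mathrm dr$.
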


\begin{proof}
Set $p(r)=|V(r)|^2r^{n-3}$.  Expanding the square on the left-hand side of \eqref{eq:oscillatory-observability} gives
\begin{align}\label{eq:oscillatory-expansion}
&|V_1(r)\cos(Lr)+V_2(r)\sin(Lr)|^2r^{n-3} =\frac12p(r)+A(r)\cos(2Lr)+D(r)\sin(2Lr),
\end{align}
where
\begin{align*}
A(r) :=\frac12\bigl(V_1(r)^2-V_2(r)^2\bigr)r^{n-3} \ \ \mbox{and}\ \
D(r):=V_1(r)V_2(r)r^{n-3}.
\end{align*}
The hypothesis on $V$ gives
\begin{align*}
|A(r)|+|D(r)|\lesssim p(r) \ \ \mbox{and}\ \
|A'(r)|+|D'(r)|\lesssim\frac{p(r)}{r}.
\end{align*}
Integration by parts therefore gives
\begin{align}\label{eq:osc-error}
\left| \int_R^\rho \bigl(A(r)\cos(2Lr)+D(r)\sin(2Lr)\bigr)\,\mathrm dr \right|\lesssim \frac{1}{L}\left( p(R)+p(\rho)+\int_R^\rho\frac{p(r)}{r}\,\mathrm dr \right).
\end{align}
For $r,s\in[R,\rho]$, one derives
\begin{align*}
\left|\ln\frac{p(s)}{p(r)}\right| \leqslant \bigl(2C_0+|n-3|\bigr) \left|\ln\frac{s}{r}\right|.
\end{align*}
Thus, the values of $p$ are comparable on each of the intervals $[R,2R]$ and $[\rho/2,\rho]$.  Since $2R<\rho$,
\begin{align*}
p(R) &\lesssim\frac{1}{R}\int_R^{2R}p(r)\,\mathrm dr,\notag\\
p(\rho) &\lesssim\frac{1}{\rho}\int_{\rho/2}^{\rho}p(r)\,\mathrm dr,
\end{align*}
and
\begin{align*}
\int_R^\rho\frac{p(r)}{r}\,\mathrm dr \leqslant\frac{1}{R}\int_R^\rho p(r)\,\mathrm dr.
\end{align*}
Using $LR=N$, we bound the left-hand side of \eqref{eq:osc-error} by
\begin{align*}
\frac{C}{N}\int_R^\rho p(r)\,\mathrm dr.
\end{align*}
Choosing $N_0$ so that $C/N_0<1/4$ and using \eqref{eq:oscillatory-expansion} proves the claim.
\end{proof}

\subsection{Proof of Theorem~\ref{thm:sharp}}

The upper estimate follows from Theorem~\ref{thm:upper} and Remark~\ref{rem:A2-implies-A1}.  It remains to derive the lower bound estimate.

Fix $\varepsilon$ as in Lemma~\ref{lem:entrance-data}. By Remark~\ref{rem:no-weight}, after decreasing $\rho$ in Lemma~\ref{lem:renormalized-amplitude}, we may assume that
\begin{align}\label{eq:Fourier-nondegeneracy-general}
|\widehat u_1(\xi)| \geqslant c_n|P_{u_1}| \ \ \mbox{for}\ \ |\xi|\leqslant\rho.
\end{align}
Take $0<\varepsilon_0<\varepsilon$ so small that \eqref{eq:low-expansion-quantitative} gives
\begin{align*}
K(t,r)\geqslant\frac{t}{2} \ \ \mbox{for}\ \ r\Lambda(t)\leqslant\varepsilon_0.
\end{align*}
For sufficiently large $t$, it follows from Plancherel's identity and \eqref{eq:Fourier-nondegeneracy-general} that
\begin{align}\label{eq:very-low-lower}
\|u(t,\cdot)\|_{L^2}^2 \gtrsim |P_{u_1}|^2t^2[\Lambda(t)]^{-n}.
\end{align}

We next derive the lower bound estimate corresponding to the second term in \eqref{eq:general-rate}.  Let us choose $N\geqslant N_0$ as in Lemma~\ref{lem:oscillatory-observability}, with $N>\varepsilon$.  The parameters $\varepsilon$, $\rho$, and $N$ are now fixed independently of $t$.  The choice of $\rho$ may depend on $u_1$.  For sufficiently large $t$, we put
\begin{align*}
R_t:=\frac{N}{\Lambda(t)}<\frac{\rho}{2}.
\end{align*}
The first component of \eqref{eq:W-rotated-amplitude} is
\begin{align*}
\sqrt{a(t)}\,rK(t,r) =V_1(t,r)\cos\bigl(r\Lambda(t)\bigr) +V_2(t,r)\sin\bigl(r\Lambda(t)\bigr).
\end{align*}
Using polar coordinates, \eqref{eq:Fourier-nondegeneracy-general}, Lemma~\ref{lem:oscillatory-observability}, and Lemma~\ref{lem:renormalized-amplitude}, we obtain
\begin{align}\label{eq:middle-lower-r}
\|u(t,\cdot)\|_{L^2}^2 &\gtrsim \frac{|P_{u_1}|^2}{a(t)} \int_{R_t}^{\rho}|V(t,r)|^2r^{n-3}\,\mathrm dr\notag\\
&\gtrsim \frac{|P_{u_1}|^2}{a(t)} \int_{R_t}^{\rho}a(s_r)(1+s_r)^2r^{n-1}\,\mathrm dr.
\end{align}
Let $s_\rho$ be given by $\rho\Lambda(s_\rho)=\varepsilon$, and define $\sigma_N(t)$ by
\begin{align*}
\Lambda\bigl(\sigma_N(t)\bigr) =\frac{\varepsilon}{N}\Lambda(t).
\end{align*}
The change of variables $r=\frac{\varepsilon}{\Lambda(s)}$ transforms \eqref{eq:middle-lower-r} into
\begin{align}\label{eq:middle-lower-s}
\|u(t,\cdot)\|_{L^2}^2 \gtrsim \frac{|P_{u_1}|^2}{a(t)} \int_{s_\rho}^{\sigma_N(t)} \frac{(1+s)^2[a(s)]^2}{[\Lambda(s)]^{n+1}}\,\mathrm ds.
\end{align}

It remains to compare the truncated integral with $\ml Q_{a,n}(t)$. We bound the integral over $[0,s_\rho]$ by a constant multiple of the integral over the fixed interval $[s_\rho,s_\rho+1]$.  After division by $a(t)$, the contribution from $[\sigma_N(t),t]$ is controlled by the very-low-frequency estimate \eqref{eq:very-low-lower}.  The function
\begin{align*}
q(s):=\frac{(1+s)^2[a(s)]^2}{[\Lambda(s)]^{n+1}}
\end{align*}
is positive and continuous.  Since $s_\rho$ is fixed, we get
\begin{align}\label{eq:early-tail-comparison}
\int_0^{s_\rho}q(s)\,\mathrm ds \lesssim \int_{s_\rho}^{s_\rho+1}q(s)\,\mathrm ds.
\end{align}
For all sufficiently large $t$, the interval on the right-hand side is contained in $[s_\rho,\sigma_N(t)]$. The comparison constant in \eqref{eq:early-tail-comparison} may depend on $\rho$, and hence on $u_1$.

On $[\sigma_N(t),t]$, the ratio $\Lambda(s)/\Lambda(t)$ stays between the fixed constants $\varepsilon/N$ and $1$.  By \eqref{eq:A3} with $m=1$,
\begin{align*}
a(s)\approx a(t) \ \ \mbox{and}\ \ t-\sigma_N(t)\approx\frac{\Lambda(t)}{a(t)}.
\end{align*}
Consequently,
\begin{align}\label{eq:late-tail-comparison}
\frac{1}{a(t)} \int_{\sigma_N(t)}^tq(s)\,\mathrm ds &\lesssim \frac{t^2a(t)}{[\Lambda(t)]^{n+1}} \bigl(t-\sigma_N(t)\bigr)\notag\\
&\lesssim t^2[\Lambda(t)]^{-n}.
\end{align}
It follows from \eqref{eq:early-tail-comparison} and \eqref{eq:late-tail-comparison} that
\begin{align*}
\frac{\ml Q_{a,n}(t)}{a(t)} \lesssim \frac{1}{a(t)} \int_{s_\rho}^{\sigma_N(t)}q(s)\,\mathrm ds +t^2[\Lambda(t)]^{-n}.
\end{align*}
Combining this estimate with \eqref{eq:very-low-lower} and \eqref{eq:middle-lower-s}, and decreasing the lower-bound constant if necessary, we obtain
\begin{align*}
\|u(t,\cdot)\|_{L^2}^2 \gtrsim |P_{u_1}|^2[\ml G_{a,n}(t)]^2.
\end{align*}
This proves the lower estimate in \eqref{eq:general-sharp} and completes the proof of Theorem~\ref{thm:sharp}.

\section{The endpoint speed}

We now consider $a(t)=(1+t)^{-1}$, for which $\Lambda(t)=1+\ln(1+t)$. Let us introduce
\begin{align*}
\theta:=\ln(1+t) \ \ \mbox{and}\ \
v(\theta,r):=K(t,r).
\end{align*}
Then \eqref{eq:kernel-equation} becomes
\begin{align}\label{eq:endpoint-constant-ode}
\begin{cases}
	v_{\theta\theta}-v_\theta+r^2v=0,&r\geqslant0,\ t>0,\\
v(0,r)=0,\ \ v_\theta(0,r)=1,&r\geqslant0.	
\end{cases}
\end{align}
The characteristic roots are
\begin{align*}
\lambda_\pm(r):=\frac{1\pm\sqrt{1-4r^2}}{2}.
\end{align*}
Solving \eqref{eq:endpoint-constant-ode}, we obtain the real-valued formula
\begin{align}\label{eq:endpoint-kernel-formula}
K(t,r)=
\begin{cases}
\displaystyle \frac{(1+t)^{\frac{1+\sqrt{1-4r^2}}{2}} -(1+t)^{\frac{1-\sqrt{1-4r^2}}{2}}} {\sqrt{1-4r^2}} &\text{if}\ \ 0\leqslant r<\frac{1}{2},\\[4mm]
(1+t)^{\frac{1}{2}}\ln(1+t) &\text{if}\ \ r=\frac{1}{2},\\[2mm]
\displaystyle \frac{2(1+t)^{\frac{1}{2}}}{\sqrt{4r^2-1}} \sin\left(\frac{\sqrt{4r^2-1}}{2}\ln(1+t)\right) &\text{if}\ \ r>\frac{1}{2}.
\end{cases}
\end{align}

\begin{lemma}\label{lem:endpoint-kernel-upper}
There are $\delta\in(0,\frac12)$ and $c,c_\delta>0$ such that, for all sufficiently large $t$,
\begin{align}\label{eq:endpoint-kernel-upper}
|K(t,r)| \lesssim
\begin{cases}
(1+t)\exp\bigl(-cr^2\ln(1+t)\bigr) &\text{if}\ \ 0\leqslant r\leqslant\delta,\\[1mm]
(1+t)^{1-c_\delta}\ln(1+t) &\text{if}\ \ \delta\leqslant r<\frac{1}{2},\\[1mm]
t^{\frac{1}{2}}\ln t &\text{if}\ \ r\geqslant\frac{1}{2}.
\end{cases}
\end{align}
\end{lemma}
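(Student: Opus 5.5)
We will read everything off the explicit formula \eqref{eq:endpoint-kernel-formula}, writing $\theta=\ln(1+t)$ and $\kappa(r)=\sqrt{1-4r^2}$ for $0\leqslant r<\frac12$. The proof splits into the three frequency ranges of \eqref{eq:endpoint-kernel-upper}, and $\delta$ is fixed in the first one.

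\textbf{Small frequencies.} For $0\leqslant r\leqslant\delta$ we drop the subtracted term in the numerator, which is nonnegative. Since $\kappa(r)\geqslant\kappa(\delta)>0$, this gives
\begin{align*}
0\leqslant K(t,r)\leqslant \kappa(\delta)^{-1}(1+t)^{\lambda_+(r)},\qquad \lambda_+(r)=\frac{1+\kappa(r)}{2}.
\end{align*}
The elementary inequality $\sqrt{1-x}\leqslant1-\frac x2$ for $x\in[0,1]$, applied with $x=4r^2$, yields $\lambda_+(r)\leqslant1-r^2$. Hence $(1+t)^{\lambda_+(r)}\leqslant(1+t)\exp(-r^2\theta)$. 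This proves the first line with $c=1$ and any fixed $\delta\in(0,\frac12)$, say $\delta=\frac14$.

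\textbf{Intermediate frequencies.} This is the only step needing care, because the factor $\kappa(r)^{-1}$ blows up as $r\uparrow\frac12$. We avoid dividing by $\kappa$ by writing the numerator as an integral:
\begin{align*}
K(t,r)=\frac{e^{\lambda_+\theta}-e^{\lambda_-\theta}}{\lambda_+-\lambda_-}=\theta\int_0^1 e^{(\lambda_-+s(\lambda_+-\lambda_-))\theta}\,\mathrm ds\leqslant\theta\,e^{\lambda_+(r)\theta}.
\end{align*}
On $[\delta,\frac12)$ we have $\lambda_+(r)\leqslant\lambda_+(\delta)=1-c_\delta$ with $c_\delta:=\frac{1-\kappa(\delta)}{2}>0$. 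Therefore $|K(t,r)|\leqslant(1+t)^{1-c_\delta}\ln(1+t)$, which is the second line. The same integral bound also covers the small-frequency range, but there it would lose the factor $\theta$, so the first step remains as stated.

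\textbf{High frequencies.} At $r=\frac12$ the formula gives $K=(1+t)^{1/2}\ln(1+t)$ directly. For $r>\frac12$ we set $\omega=\frac{\sqrt{4r^2-1}}{2}$ and use $|\sin(\omega\theta)|\leqslant\min\{1,\omega\theta\}$. This gives
\begin{align*}
|K(t,r)|=(1+t)^{1/2}\,\frac{|\sin(\omega\theta)|}{\omega}\leqslant(1+t)^{1/2}\,\theta.
\end{align*}
For large $t$ the bound $(1+t)^{1/2}\ln(1+t)\lesssim t^{1/2}\ln t$ holds, which proves the third line.
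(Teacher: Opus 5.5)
Your proof is correct and follows the paper's argument: in each of the three zones you read the bound off the explicit formula \eqref{eq:endpoint-kernel-formula}, using the mean value theorem (in your integral form) for $\delta\leqslant r<\frac12$ and $|\sin z|\leqslant|z|$ for $r\geqslant\frac12$. The only difference is in the small-frequency zone: your exact inequality $\sqrt{1-x}\leqslant1-\frac x2$ replaces the paper's Taylor expansion of $\lambda_\pm$, which gives $c=1$ for any fixed $\delta\in(0,\frac12)$ rather than requiring $\delta$ to be small.
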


\begin{proof}
For $0\leqslant r\leqslant\delta$, Taylor's formula gives
\begin{align*}
\lambda_+(r)=1-r^2+O(r^4) \ \ \mbox{and}\ \ \lambda_-(r)=r^2+O(r^4).
\end{align*}
Choosing $\delta>0$ sufficiently small, we derive the first estimate in \eqref{eq:endpoint-kernel-upper} from \eqref{eq:endpoint-kernel-formula}.

For $\delta\leqslant r<\frac12$, the mean value theorem gives
\begin{align*}
|K(t,r)| \leqslant (1+t)^{\lambda_+(r)}\ln(1+t) \leqslant (1+t)^{1-c_\delta}\ln(1+t).
\end{align*}
For $r\geqslant\frac12$, the last two lines of \eqref{eq:endpoint-kernel-formula} can be written as
\begin{align*}
K(t,r) =(1+t)^{\frac12}\ln(1+t) \frac{\sin\bigl(\frac12\sqrt{4r^2-1}\ln(1+t)\bigr)} {\frac12\sqrt{4r^2-1}\ln(1+t)},
\end{align*}
where the quotient is interpreted as one at $r=\frac12$.  The last estimate follows from $|\sin z|\leqslant|z|$.
\end{proof}

\begin{proof}[Proof of Theorem~\ref{thm:endpoint}]
For $r\geqslant\delta$, Lemma~\ref{lem:endpoint-kernel-upper} and
\begin{align*}
(1+t)^{1-c_\delta}\ln(1+t)+ t^{\frac12}\ln t \lesssim t(\ln t)^{-\frac n4}
\end{align*}
yield
\begin{align}\label{eq:endpoint-high-upper}
\|K(t,|\xi|)\widehat u_1(\xi)\|_{L^2(|\xi|\geqslant\delta)} \lesssim t(\ln t)^{-\frac n4}\|u_1\|_{L^2}.
\end{align}
For $r\leqslant\delta$, the $L^1$--$L^\infty$ Fourier estimate gives
\begin{align}\label{eq:endpoint-low-upper}
\|K(t,|\xi|)\widehat u_1(\xi)\|_{L^2(|\xi|\leqslant\delta)}^2&\lesssim t^2\|u_1\|_{L^1}^2 \int_0^\delta \exp(-2cr^2\ln t)r^{n-1}\,\mathrm dr\notag\\
&\lesssim t^2(\ln t)^{-\frac n2}\|u_1\|_{L^1}^2.
\end{align}
Here the last estimate follows from the change of variables $\eta=r\sqrt{\ln t}$.  Plancherel's identity, \eqref{eq:endpoint-high-upper}, and \eqref{eq:endpoint-low-upper} prove the upper bound in \eqref{eq:endpoint-rate}.

For the lower estimate, fix a sufficiently small $\delta_*>0$.  If
\begin{align*}
0\leqslant r\leqslant \frac{\delta_*}{\sqrt{\ln(1+t)}},
\end{align*}
then the expansion of $\lambda_\pm$ gives
\begin{align*}
(1+t)^{\lambda_+(r)} \geqslant(1+t)\exp(-C\delta_*^2)
\end{align*}
and
\begin{align*}
(1+t)^{\lambda_-(r)} \leqslant\exp(C\delta_*^2).
\end{align*}
Consequently, for all sufficiently large $t$, one has
\begin{align}\label{eq:endpoint-pointwise-lower}
K(t,r)\gtrsim1+t
\end{align}
throughout this ball.  By continuity of $\widehat u_1$ at the origin, choose $\rho>0$ such that
\begin{align*}
|\widehat u_1(\xi)|\geqslant c_n|P_{u_1}| \ \ \mbox{for}\ \ |\xi|\leqslant\rho.
\end{align*}
For all sufficiently large $t$, one has
\begin{align*}
\frac{\delta_*}{\sqrt{\ln(1+t)}}\leqslant\rho.
\end{align*}
It follows from \eqref{eq:endpoint-pointwise-lower} that
\begin{align*}
\|u(t,\cdot)\|_{L^2}^2 &\gtrsim |P_{u_1}|^2t^2 \int_0^{\frac{\delta_*}{\sqrt{\ln(1+t)}}}r^{n-1}\,\mathrm dr\notag\\
&\gtrsim |P_{u_1}|^2t^2(\ln t)^{-\frac n2}.
\end{align*}
Taking square roots completes the proof.
\end{proof}

\section{Examples}

In the following examples, let us assume that $u_1\in L^2\cap L^1$ and $P_{u_1}\neq0$.  All estimates are understood as $t\rightarrow+\infty$.  For $n\geqslant2$, we introduce
\begin{align*}
\alpha_{\mathrm c}(n):=-\frac{n-2}{n-1}
\end{align*}
and
\begin{align}\label{eq:R-alpha-n}
\ml R_{\alpha,n}(t) :=
\begin{cases}
t^{1-\frac{n}{2}(\alpha+1)} &\text{if}\ \ n=1\ \ \mbox{or}\ \ n\geqslant2\ \mbox{and}\ \alpha<\alpha_{\mathrm c}(n),\\
t^{\frac{n-2}{2(n-1)}}(\ln t)^{\frac{1}{2}} &\text{if}\ \ n\geqslant2\ \mbox{and}\ \alpha=\alpha_{\mathrm c}(n),\\
t^{-\frac{\alpha}{2}} &\text{if}\ \ n\geqslant2\ \mbox{and}\ \alpha>\alpha_{\mathrm c}(n).
\end{cases}
\end{align}

\begin{description}[leftmargin=0pt,labelindent=0pt,itemsep=1em]
\item[Polynomial speeds]
For $a(t)=(1+t)^\alpha$ with $\alpha>-1$, we obtain the following sharp estimate with the rate function defined in \eqref{eq:R-alpha-n}:
\begin{align}\label{eq:polynomial-rates}
\|u(t,\cdot)\|_{L^2}\approx\ml R_{\alpha,n}(t).
\end{align}
By \eqref{eq:polynomial-rates}, the norm grows, is bounded, or decays according as $\alpha<1$, $\alpha=1$, or $\alpha>1$ when $n=1$.  When $n\geqslant3$, the norm grows, remains bounded, or decays according as $\alpha<0$, $\alpha=0$, or $\alpha>0$.  For $n=2$ and $\alpha=0$, the norm grows at the rate $(\ln t)^{\frac12}$.

\item[Logarithmically modified polynomial speeds]
Let
\begin{align*}
a(t)=(1+t)^\alpha[\ln(\mathrm e+t)]^\beta \ \ \mbox{with}\ \ \alpha>-1\ \ \mbox{and}\ \ \beta\in\mb R.
\end{align*}
If $n=1$, or if $n\geqslant2$ and $\alpha<\alpha_{\mathrm c}(n)$, then
\begin{align}\label{eq:log-modified-subcritical}
\|u(t,\cdot)\|_{L^2} \approx t^{1-\frac n2(\alpha+1)}(\ln t)^{-\frac{n\beta}{2}}.
\end{align}
If $n\geqslant2$ and $\alpha>\alpha_{\mathrm c}(n)$, then
\begin{align}\label{eq:log-modified-supercritical}
\|u(t,\cdot)\|_{L^2} \approx t^{-\frac\alpha2}(\ln t)^{-\frac\beta2}.
\end{align}
At $\alpha=\alpha_{\mathrm c}(n)$,
\begin{align}\label{eq:log-modified-critical}
\|u(t,\cdot)\|_{L^2} \approx t^{-\frac\alpha2}
\begin{cases}
(\ln t)^{\frac{1-n\beta}{2}} &\text{if}\ \ \beta<\frac{1}{n-1},\\[1mm]
(\ln t)^{-\frac{1}{2(n-1)}}(\ln\ln t)^{\frac{1}{2}} &\text{if}\ \ \beta=\frac{1}{n-1},\\[1mm]
(\ln t)^{-\frac{\beta}{2}} &\text{if}\ \ \beta>\frac{1}{n-1}.
\end{cases}
\end{align}
The formulas \eqref{eq:log-modified-subcritical}, \eqref{eq:log-modified-supercritical}, and \eqref{eq:log-modified-critical} cover all $\alpha>-1$.  In particular, when $n=2$ and $\alpha=0$, the norm grows for $\beta<\frac12$, stays bounded for $\beta=\frac12$, and decays for $\beta>\frac12$.  When $n\geqslant3$ and $\alpha=0$, the corresponding threshold is $\beta=0$.  For $n=1$ and $\alpha=1$, the logarithmic factor determines whether the norm grows, remains bounded, or decays.

\item[Log-periodically oscillating speeds]
Let
\begin{align*}
a(t)=(1+t)^\alpha \left[2+\sin\bigl(\omega\ln(1+t)\bigr)\right] \ \ \mbox{with}\ \ \alpha>-1\ \ \mbox{and}\ \ \omega\neq0.
\end{align*}
Then
\begin{align}\label{eq:oscillating-polynomial-rate}
\|u(t,\cdot)\|_{L^2}\approx\ml R_{\alpha,n}(t).
\end{align}
The estimate \eqref{eq:oscillating-polynomial-rate} has the same order as the corresponding estimate \eqref{eq:polynomial-rates} for polynomial speeds.   The speed may also be nonmonotone. The assumptions allow these oscillations on the logarithmic time scale.

\item[Exponential-type speeds]
Let
\begin{align*}
a(t)=(1+t)^\gamma \exp\bigl((1+t)^\beta-1\bigr) \ \ \mbox{with}\ \ \beta>0\ \ \mbox{and}\ \ \gamma\in\mb R.
\end{align*}
Then
\begin{align}\label{eq:stretched-exponential-rate}
\|u(t,\cdot)\|_{L^2} \approx
\begin{cases}
t^{\beta+\frac{1}{2}}[a(t)]^{-\frac{1}{2}}&\text{if}\ \ n=1,\\
[a(t)]^{-\frac{1}{2}}&\text{if}\ \ n\geqslant2.
\end{cases}
\end{align}
The estimates in \eqref{eq:stretched-exponential-rate} imply $L^2$ decay of the solution for all $n\geqslant1$. The exponential speed $a(t)=\mathrm e^t$ is obtained by taking $\beta=1$ and $\gamma=0$.
\end{description}

For the first two classes of speeds, we put $\ell(t):=\ln(\mathrm e+t)$ and write $a(t)=(1+t)^\alpha[\ell(t)]^\beta$ with $\alpha>-1$ and $\beta\in\mb R$. Karamata's theorem \cite[Theorem~1.5.11]{Bingham-Goldie-Teugels=1987} gives
\begin{align*}
	\Lambda(t)&\approx(1+t)^{\alpha+1}[\ell(t)]^\beta,\\
	\int_0^T(T-s)s[a(s)]^2\,\mathrm ds&\leqslant T\int_0^T s[a(s)]^2\,\mathrm ds\lesssim T[\Lambda(T)]^2,
\end{align*}
where the last estimate uses $2\alpha+1>-1$. Direct differentiation also gives
\begin{align*}
	\frac{|a^{(k)}(t)|}{a(t)}\lesssim(1+t)^{-k}\approx\left(\frac{a(t)}{\Lambda(t)}\right)^k \ \ \mbox{for}\ \ k=1,2,3.
\end{align*}
These estimates extend to bounded time intervals after changing the constants. Thus, the condition \eqref{eq:A2} holds, and the condition \eqref{eq:A3} holds with $m=3$.

Moreover, \eqref{eq:general-Q} yields
\begin{align*}
	\ml Q_{a,n}(t)\approx1+\int_1^t(1+s)^{-(n-1)(\alpha+1)}[\ell(s)]^{-(n-1)\beta}\,\mathrm ds \ \ \mbox{for}\ \ t\geqslant2.
\end{align*}
Evaluating this integral and substituting into \eqref{eq:general-rate}, we obtain \eqref{eq:log-modified-subcritical}, \eqref{eq:log-modified-supercritical}, and \eqref{eq:log-modified-critical} from Theorem~\ref{thm:sharp}. The choice $\beta=0$ gives \eqref{eq:polynomial-rates}.

For the log-periodically oscillating speed, we have
\begin{align*}
	a(t)\approx(1+t)^\alpha \ \ \mbox{and}\ \ \Lambda(t)\approx(1+t)^{\alpha+1}.
\end{align*}
By comparison with the monotone speed $(1+t)^\alpha$, we verify the condition \eqref{eq:A2}. Moreover, by direct differentiation, we verify the condition \eqref{eq:A3} with $m=3$. Comparing the expressions in \eqref{eq:general-Q} and \eqref{eq:general-rate} with those for the polynomial speed, we obtain \eqref{eq:oscillating-polynomial-rate} from Theorem~\ref{thm:sharp}.

For the exponential-type speed, let us set
\begin{align*}
\phi(t):=(1+t)^\beta-1+\gamma\ln(1+t) \ \ \mbox{and}\ \ a(t)=\mathrm e^{\phi(t)}.
\end{align*}
For all sufficiently large $t$, one has
\begin{align*}
\phi'(t)\approx(1+t)^{\beta-1}.
\end{align*}
By splitting the integral defining $\Lambda(t)$ at $t/2$, we derive the following estimate:
\begin{align}\label{eq:Lambda-stretched}
\Lambda(t)\approx\frac{a(t)}{\phi'(t)} \approx a(t)(1+t)^{1-\beta}.
\end{align}
Indeed, put $d_T:=(1+T)^{1-\beta}$. For $T/2\leqslant s\leqslant T$,
\begin{align*}
\phi(T)-\phi(s) \gtrsim (T-s)(1+T)^{\beta-1},
\end{align*}
whereas the integral over $[0,T/2]$ is exponentially smaller than $a(T)d_T$. On $[T-d_T,T]$, one also has $\phi(T)-\phi(s)\leqslant C$ for large $T$, since $d_T/T\rightarrow0$. Hence
\begin{align*}
\Lambda(T)\geqslant\int_{T-d_T}^T a(s)\,\mathrm ds \geqslant\mathrm e^{-C}a(T)d_T,
\end{align*}
which proves the lower bound in \eqref{eq:Lambda-stretched}. By splitting the integral at $T/2$ again, we obtain
\begin{align*}
\int_0^T(T-s)s[a(s)]^2\,\mathrm ds &\lesssim T[a(T)]^2 \int_0^{+\infty}x\mathrm e^{-c x/d_T}\,\mathrm dx\notag\\
&\lesssim T[a(T)]^2d_T^2 \approx T[\Lambda(T)]^2.
\end{align*}
Thus \eqref{eq:A2} holds.  Moreover, the derivatives of $\phi$ give
\begin{align*}
\frac{|a^{(k)}(t)|}{a(t)} \lesssim (1+t)^{k(\beta-1)} \approx \left(\frac{a(t)}{\Lambda(t)}\right)^k \ \ \mbox{for}\ \ k=1,2,3,
\end{align*}
after changing the constant on a bounded time interval.  Hence \eqref{eq:A3} holds with $m=3$.  Finally, \eqref{eq:Lambda-stretched} gives
\begin{align*}
\ml Q_{a,1}(t)\approx(1+t)^{2\beta+1} \ \ \mbox{and}\ \
\ml Q_{a,n}(t)\approx1 \ \ \mbox{for}\ \ n\geqslant2,
\end{align*}
which proves \eqref{eq:stretched-exponential-rate}.

\raggedbottom
\subsection*{Acknowledgements}
Halit S. Aslan is supported by the S\~ao Paulo Research Foundation (FAPESP), grant No.~2025/24251-0.  Wenhui Chen is supported in part by the National Natural Science Foundation of China, grant No.~12301270, and the Guangdong Basic and Applied Basic Research Foundation, grant No.~2025A1515010240. The authors thank Ryo Ikehata and Michael Reissig for some suggestions in the preparation of this manuscript.


\begin{thebibliography}{99}

\bibitem{Aslan-Reissig=2022}
H. S. Aslan and M. Reissig,
\newblock $L^p$--$L^q$ estimates for wave equations with strong time-dependent oscillations,
\newblock \emph{Hokkaido Math. J.} \textbf{51} (2022), no.~1, 57--106.

\bibitem{Bingham-Goldie-Teugels=1987}
N. H. Bingham, C. M. Goldie, and J. L. Teugels,
\newblock \emph{Regular Variation},
\newblock Cambridge University Press, Cambridge, 1987.

\bibitem{Chen=2026-Ball}
W. Chen,
\newblock On a question of Ball for the linear wave equation,
\newblock preprint, 2026.

\bibitem{Chen-Ikehata=2026}
W. Chen and R. Ikehata,
\newblock Large time behavior for the classical wave equation with different regular data and its applications,
\newblock \emph{Asymptot. Anal.} (2026), published online,
\href{https://doi.org/10.1177/09217134261440139}{doi:10.1177/09217134261440139}.

\bibitem{Chen-Takeda=2023}
W. Chen and H. Takeda,
\newblock Large-time asymptotic behavior for the classical thermoelastic system,
\newblock \emph{J. Differential Equations} \textbf{377} (2023), 809--848.

\bibitem{Ebert-Fitriana-Hirosawa=2015}
M. R. Ebert, L. Fitriana, and F. Hirosawa,
\newblock On the energy estimates of the wave equation with time dependent propagation speed asymptotically monotone functions,
\newblock \emph{J. Math. Anal. Appl.} \textbf{432} (2015), no.~2, 654--677.

\bibitem{Ebert-Reissig=2018}
M. R. Ebert and M. Reissig,
\newblock \emph{Methods for Partial Differential Equations},
\newblock Birkh\"auser/Springer, Cham, 2018.

\bibitem{Galstian=2003}
A. Galstian,
\newblock $L^p$--$L^q$ decay estimates for the wave equations with exponentially growing speed of propagation,
\newblock \emph{Appl. Anal.} \textbf{82} (2003), no.~3, 197--214.

\bibitem{Ghisi-Gobbino=2025}
M. Ghisi and M. Gobbino,
\newblock Generalized energy conservation for linear wave equations with time-dependent propagation speed,
\newblock \emph{Math. Ann.} \textbf{392} (2025), no.~1, 1447--1479.

\bibitem{Hirosawa=2007}
F. Hirosawa,
\newblock On the asymptotic behavior of the energy for the wave equations with time depending coefficients,
\newblock \emph{Math. Ann.} \textbf{339} (2007), no.~4, 819--838.

\bibitem{Hirosawa-Wirth=2009}
F. Hirosawa and J. Wirth,
\newblock Generalised energy conservation law for wave equations with variable propagation speed,
\newblock \emph{J. Math. Anal. Appl.} \textbf{358} (2009), no.~1, 56--74.

\bibitem{Ikawa=2000}
M. Ikawa,
\newblock \emph{Hyperbolic Partial Differential Equations and Wave Phenomena},
\newblock Translations of Mathematical Monographs, vol.~189,
American Mathematical Society, Providence, RI, 2000.

\bibitem{Ikehata=2023}
R. Ikehata,
\newblock $L^2$-blowup estimates of the wave equation and its application to local energy decay,
\newblock \emph{J. Hyperbolic Differ. Equ.} \textbf{20} (2023), no.~1, 259--275.

\bibitem{Ikehata=2025-wave}
R. Ikehata,
\newblock $L^2$-boundedness for 1-D wave equations with time variable coefficients,
\newblock \emph{Arch. Math. (Basel)} \textbf{125} (2025), no.~4, 445--453.

\bibitem{Lax-Phillips=1989}
P. D. Lax and R. S. Phillips,
\newblock \emph{Scattering Theory},
\newblock revised ed., Academic Press, Boston, 1989.

\bibitem{Matsuyama=2006}
T. Matsuyama,
\newblock Asymptotic behaviour for wave equation with time-dependent coefficients,
\newblock \emph{Ann. Univ. Ferrara} \textbf{52} (2006), no.~2, 383--393.

\bibitem{Morawetz=1961}
C. S. Morawetz,
\newblock The decay of solutions of the exterior initial-boundary value problem for the wave equation,
\newblock \emph{Comm. Pure Appl. Math.} \textbf{14} (1961), no.~3, 561--568.

\bibitem{Morawetz-Ralston-Strauss=1977}
C. S. Morawetz, J. V. Ralston, and W. A. Strauss,
\newblock Decay of solutions of the wave equation outside nontrapping obstacles,
\newblock \emph{Comm. Pure Appl. Math.} \textbf{30} (1977), no.~4, 447--508.

\bibitem{Peral=1980}
J. C. Peral,
\newblock $L^p$ estimates for the wave equation,
\newblock \emph{J. Funct. Anal.} \textbf{36} (1980), no.~1, 114--145.

\bibitem{Reissig-Smith=2005}
M. Reissig and J. Smith,
\newblock $L^p$--$L^q$ estimate for wave equation with bounded time dependent coefficient,
\newblock \emph{Hokkaido Math. J.} \textbf{34} (2005), no.~3, 541--586.

\bibitem{Reissig-Yagdjian=2000-increasing}
M. Reissig and K. Yagdjian,
\newblock $L^p$--$L^q$ decay estimates for the solutions of strictly hyperbolic equations of second order with increasing in time coefficients,
\newblock \emph{Math. Nachr.} \textbf{214} (2000), 71--104.

\bibitem{Reissig-Yagdjian=2000-oscillations}
M. Reissig and K. Yagdjian,
\newblock $L^p$--$L^q$ decay estimates for hyperbolic equations with oscillations in coefficients,
\newblock \emph{Chinese Ann. Math. Ser. B} \textbf{21} (2000), no.~2, 153--164.

\bibitem{Strichartz=1970}
R. S. Strichartz,
\newblock Convolutions with kernels having singularities on a sphere,
\newblock \emph{Trans. Amer. Math. Soc.} \textbf{148} (1970), 461--471.

\bibitem{Takeda=2026}
H. Takeda,
\newblock $L^2$-estimates for the linear elastic waves,
\newblock \emph{Math. Ann.} \textbf{394} (2026), no.~4, Paper No.~82, 31 pp.

\end{thebibliography}
\end{document}